\newtheorem{thm}{Theorem}[section]
\newtheorem{prop}[thm]{Proposition}
\newtheorem{defn}[thm]{Definition}
\theoremstyle{definition}
\theoremstyle{remark}
\begin{document}
\title[K-G-frames in Hilbert spaces]
{ $K$-$G$-frames and approximate $K$-$G$-duals in Hilbert spaces }

\author{Jahangir Cheshmavar}
\author{Maryam Rezaei Sarkhaei}

\email{j$_{_-}$cheshmavar@pnu.ac.ir}
\email{rezaei.sarkhaei@yahoo.com}

\address{Department of Mathematics,
Payame Noor University, P.O.BOX 19395-3697, Tehran, IRAN.}

\thanks{}
\thanks{}
\thanks{\it 2010 Mathematics Subject Classification: 42C15; 42C40.}

\keywords{frame; $g$-frame; $K$-$g$-frame;
approximate $K$-$g$-duals; redundancy.} \dedicatory{} \commby{}
% ----------------------------------------------------------------
\begin{abstract}
Approximate duality of frame pairs have been investigated by
Christensen and Laugesen in (Sampl. Theory Signal Image Process.,
9, 2011, 77-90), with the motivation to obtain an important
applications in Gabor systems, wavelets and general frame theory.
In this paper we obtain some of the known results in approximate
duality of frames to $K$-$g$-frames. We also obtain new
$K$-$g$-frames and approximate $K$-$g$-duals from a $K$-$g$-frame
and an approximate $K$-$g$-dual. Finally, we give an equivalent
condition under which the subsequence of a $K$-$g$-frame still to
be a $K$-$g$-frame.
\end{abstract}
\maketitle
% ----------------------------------------------------------------

\section{\bf Introduction and Preliminaries}\label{introduction}

The concept of frames in Hilbert spaces were introduced in the
paper ~\cite{Duffin.Schaeffer} by Duffin and Schaeffer to study
some problems in nonharmonic Fourier series, reintroduced by
Daubechies et al. in ~\cite{Daubechies.Grossmann.Meyer} to study
the connecting with wavelet and Gabor systems. For special
applications, various generalization of frames were proposed, such
as frame of subspace ~\cite{Cazassa.Kutyniok}, $g$-frames
~\cite{Sun}, $K$-frames ~\cite{Gavruta} by G$\breve{a}$vruta to
study the atomic systems with respect to a bounded linear operator
$K$ in Hilbert spaces. The concept of $K$-$g$-frames which more
general than ordinary g-frames considered by Authors in
~\cite{Asgari.Rahimi,Zhou.Zhu.1,Zhou.Zhu.2}. After that, some
properties of $K$-frames was extended to $K$-$g$-frames in
\cite{Hua.Huang} by Hua and Huang.

One of the main reason for considering frames and any type of
generalization of frames, is that, they allows each element in the
space to be non-uniquely represented as a linear combination of
the frame elements, by using of their duals; however, it is
usually complicated to calculate a dual frame explicitly. For
example, in practical, one has to invert the frame operator, in
the canonical dual frames, which is difficult when the space is
infinite-dimensional. One way to avoid this difficulty is to
consider approximate duals. The concepts of approximately dual
frames have been studied since the work of Gilbert et al.
\cite{Gilbert.et.al} in the wavelet setting, see for example
Feichtinger et al. \cite{Feichtinger.Kaiblinger} for Gabor systems
and reintroduced in systematic by Christensen and Laugesen in
\cite{Christensen.Laugesen} for dual frame pairs.

In this paper, the advantage of $K$-$g$-frames in
comparison of $g$-frames is given; with this motivation, we obtain new
$K$-$g$-frames and approximate $K$-$g$-duals and derive some of results
for the approximate duality of $K$-$g$-frames and their redundancy.

What we discuss in the following sections; we will review
some notions relating to frames, $K$-frames and $K$-$g$-frames
in the rest of this section. New $K$-$g$-frames and the advantage of $K$-$g$-frames are found
in Section 2. In Section 3 we define approximate duality of
$K$-$g$-frames and get some important properties of approximate
$K$-$g$-duals, also we extend some results of approximate duality
of frame to $K$-$g$-frames based in the paper
\cite{Christensen.Laugesen}. Section 4 contains two results on the
redundancy.

Throughout this paper, $J$ is a subset of integers $\mathbb{Z}$,
$\mathcal{H}$ is a separable Hilbert space,
$\{\mathcal{H}_j\}_{j\in J}$ is a sequence of closed subspaces of
$\mathcal{H}$. Let $\mathcal{B}(\mathcal{H},\mathcal{H}_j)$ be the
collection of all bounded linear operators from $\mathcal{H}$ into
$\mathcal{H}_j$, $\mathcal{B}(\mathcal{H},\mathcal{H})$ is denote
by $\mathcal{B}(\mathcal{H})$; for $K \in
\mathcal{B}(\mathcal{H})$, $\mathcal{R}(K)$ is the range of $K$
and also ${I}_{\mathcal{R}(K)}$ is the identity operator on
$\mathcal{R}(K)$. The space $l^2 \left( \{\mathcal{H}_j\}_{j\in J}
\right)$ is defined by
\begin{eqnarray}
l^2 \left ( \{\mathcal{H}_j\}_{j\in J} \right
)=\left\{\{f_j\}_{j\in J}:\,\ f_j \in \mathcal{H}_j,
\|\{f_j\}_{j\in J}\|^2=\sum_{j\in J}\|f_j\|^2 <+\infty \right \} ,
\end{eqnarray}
with inner product given by
\begin{eqnarray}
\langle \{f_j\}_{j\in J}, \{g_j\}_{j\in J} \rangle=\sum_{j\in
J}\langle f_j, g_j \rangle.
\end{eqnarray}
Then $l^2 \left( \{\mathcal{H}_j\}_{j\in J} \right)$ is Hilbert
space with pointwise operations.\\
Next, some terminology relating to Bessel and $g$-Bessel systems,
frames, $g$-frames, $K$-frames and related notions.

\bigskip
A sequence $\{f_j\}_{j\in J}$ contained in $\mathcal{H}$ is called
a Bessel system for $\mathcal{H}$, if there exists a positive
constant $B$ such that, for all $f \in \mathcal{H}$
\begin{eqnarray}
\sum_{j \in J}|\langle f, f_j \rangle|^2\leq B\|f\|^2.
\end{eqnarray}
The constant $B$ is called a Bessel bound of the system. If, in
addition, there exists a lower bound $A>0$ such that, for all $f
\in \mathcal{H}$
\begin{eqnarray}
A\|K^{\ast}f\|^2\leq \sum_{j \in J}|\langle f, f_j \rangle|^2,
\end{eqnarray}
the system is called a $K$-frame for $\mathcal{H}$. The constant
$A$ and $B$ are called $K$-frame bounds.\\

\noindent {\bf Remark 1:}
If $K=I_{\mathcal{H}}$, then $K$-frames are called the ordinary frames.

\bigskip
A sequence $\{\Lambda_j \in
\mathcal{B}(\mathcal{H},\mathcal{H}_j): j\in J\}$ is called a
$g$-Bessel system for $\mathcal{H}$ with respect to
$\mathcal{H}_{j}$ if there exists a positive constant $B$ such
that, for all $f \in \mathcal{H}$
\begin{eqnarray}
\sum_{j \in J}\|\Lambda_jf\|^2\leq B\|f\|^2.
\end{eqnarray}
The constant $B$ is called a $g$-Bessel bound of the system. If,
in addition, there exists a lower bound $A>0$ such that, for all
$f \in \mathcal{H}$
\begin{eqnarray}
A\|f\|^2\leq \sum_{j \in J}\|\Lambda_jf\|^2,
\end{eqnarray}
the system is called a $g$-frame for $\mathcal{H}$ with respect to
$\{\mathcal{H}_j\}_{j\in J}$. The constants $A$ and $B$ are called
$g$-frame bounds. If $A=B$, the $g$-frame is said to be tight
$g$-frame. For more information on frame theory,
basic properties of the $K$-frames and g-frames, we refer to \cite{Christensen.book, Gavruta,Sun}.
Now, we state the following basic definition, the concept of
$K$-$g$-frames, which are more general than ordinary $g$-frames
stated in \cite[Theorem (2.5)]{Asgari.Rahimi}.
\begin{defn}
Let $K \in \mathcal{B}(\mathcal{H})$ and $\Lambda_j \in
\mathcal{B}(\mathcal{H}, \mathcal{H}_j)$ for any $j\in J$. A
sequence $\{\Lambda_j\}_{j\in J}$ is called a K-g-frame for
$\mathcal{H}$ with respect to $\{H_j\}_{j\in J}$, if there exist
constants $0<A \leq B<\infty$ such that
\begin{eqnarray}\label{Def:K-g-frame}
A\|K^{\ast}f\|^2\leq \sum_{j\in J} \|\Lambda_jf\|^2\leq
B\|f\|^2,\,\ \forall f\in \mathcal{H}.
\end{eqnarray}
The constants $A$ and $B$ are called the lower and upper bounds of
K-g-frame, respectively. A $K$-$g$-frame $\{\Lambda_j\}_{j\in J}$
is said to be tight if there exists a constant $A>0$ such that
\begin{eqnarray}
\sum_{j\in J}\|\Lambda_jf\|^2=A\|K^{\ast}f\|^2,\,\ \forall f\in
\mathcal{H}.
\end{eqnarray}
\end{defn}
\noindent {\bf Remark 2:}
If $K=I_{\mathcal{H}}$, then $K$-$g$-frames are just the ordinary $g$-frames.

%==============================================================

\section{\bf New $K$-$g$-frames}

There is an advantage of studying $K$-$g$-frames in comparison with the
$g$-frames. This advantage is, as we will see in the following
examples, for a Bessel sequence in $\mathcal{B}(\mathcal{H},
\mathcal{H}_j)$, which is not an $g$-frame, we can define a
suitable operator $K$ such that this sequence be a $K$-$g$-frame
for $\mathcal{H}$ with respect to $\{\mathcal{H}_j\}_{j\in J}$.
Also, we construct new $K$-$g$-frames by
considering a $g$-frame for $\{\mathcal{H}_j\}_{j\in J}$.\\

\noindent {\bf Example 1:}
\label{Exam.1}
Let $\{e_j\}_{j=1}^{\infty}$ be an orthonormal basis for
$\mathcal{H}$ and $\mathcal{H}_j=\overline{span}\{e_j, e_{j+1}\}$, \,\ $j=1,2,3,\cdots$.
Define the operator
 $\Lambda_j:\mathcal{H}\rightarrow \mathcal{H}_j$ as
follows:
\begin{eqnarray*}
\Lambda_jf=\langle f, e_j+e_{j+1} \rangle(e_j+e_{j+1}),
\,\ \forall f\in \mathcal{H}.
\end{eqnarray*}\
Then,
\begin{eqnarray*}
\sum_{j=1}^{\infty}\|\Lambda_j f\|^2 &=&
2\sum_{j=1}^{\infty}|\langle
f, e_j+e_{j+1} \rangle|^2\\
& \leq & 2\sum_{j=1}^{\infty}(|\langle f, e_j \rangle|+|\langle f, e_{j+1} \rangle|)^2\\
& \leq & 4 \sum_{j=1}^{\infty}|\langle f, e_j \rangle|^2+4
\sum_{j=1}^{\infty}|\langle f, e_{j+1} \rangle|^2\\
& \leq & 8 \|f\|^2.
\end{eqnarray*}
That is, $\{\Lambda_j\}_{j\in J}$ is a $g$-Bessel sequence.
However, $\{\Lambda_j\}_{j\in J}$ does not satisfy the lower
$g$-frame condition, because if we consider the vectors
$g_m:=\sum_{n=1}^m (-1)^{n+1}e_n,\,\ m\in \mathbb{N}$, then
$\|g_m\|^2=m$, for all $m \in \mathbb{N}$. Fix $m\in \mathbb{N}$,
we see that

\begin{eqnarray*}
\langle g_m, e_j+e_{j+1}\rangle=\left\{
\begin{array}{ll}
0, & j>m \\
(-1)^{m+1}, & j=m \\
0, & j<m
\end{array} \right.
\end{eqnarray*}

Therefore,
\begin{eqnarray*}
\sum_{j=1}^{\infty}\|\Lambda_jg_m\|^2=2\sum_{j=1}^{\infty}|\langle
g_m, e_j+e_{j+1}\rangle|^2=2=\frac{2}{m}\|g_m\|^2, \forall m\in
\mathbb{N},
\end{eqnarray*}
that is, $\{\Lambda_j\}_{j\in J}$ does not satisfy the lower
$g$-frame condition. Now define
\begin{eqnarray*}
K:\mathcal{H} \rightarrow \mathcal{H},\,\
Kf=\sum_{j=1}^{\infty}\langle f, e_j\rangle(e_j+e_{j+1}),\,\
\forall f\in \mathcal{H}.
\end{eqnarray*}
Then $\{\Lambda_j\}_{j\in J}$ is a $K$-$g$-frame for $\mathcal{H}$
with respect to $\{\mathcal{H}_j\}_{j\in J}$.\\

\noindent {\bf Example 2:}
\label{Exam.2}
Let $\{e_j\}_{j=1}^{\infty}$ be an
orthonormal basis for $\mathcal{H}$ and
$\mathcal{H}_j=\overline{span}\{e_{3j-2}, e_{3j-1}, e_{3j}\}$, \,\ $j=1,2,3,\cdots$.
Define the operator $\Lambda_j:\mathcal{H}\rightarrow \mathcal{H}_j$
as follows:\\
\begin{eqnarray*}
\Lambda_1f=\langle f, e_1\rangle e_1+\langle f, e_2\rangle e_2+\langle f, e_3\rangle e_3 \,\ \mbox{and}\,\ \Lambda_jf=0, \,\ \mbox{for}\,\
 j\geq 2,
\end{eqnarray*}
With a simple calculate $\{\Lambda_j\}_{j\in J}$ is not a $g$-frame for
$\mathcal{H}$ with respect to $\mathcal{H}_j$,
because, if we take $f=e_4$, then

\begin{eqnarray*}
\|f\|^2=1 \,\ \mbox{and}\,\ \sum_{j=1}^{\infty}\|\Lambda_jf\|^2=\|\Lambda_1e_4\|^2=0.
\end{eqnarray*}
Define now the operator $K:\mathcal{H} \rightarrow \mathcal{H}$ as follows:
\begin{eqnarray*}
Ke_1=e_1,\,\ Ke_2=e_2 \,\ \mbox{and}\,\ Ke_j=0,\,\  \mbox{for}\,\ j\geq 3.
\end{eqnarray*}
It is easy to see that,
$K^*e_1=e_1,\,\ K^*e_2=e_2$ and $K^*e_j=0, \mbox{for}\,\ j\geq 3$.
 We show that $\{\Lambda_j\}_{j\in J}$ is a $K$-$g$-frame for
$\mathcal{H}$ with respect to $\mathcal{H}_j$. In fact, for any
$f\in \mathcal{H}$, we have
\begin{eqnarray*}
\|K^*f\|^2=\|\sum_{j=1}^{\infty}\langle f, e_j\rangle K^*e_j\|^2=
|\langle f, e_1\rangle|^2+|\langle f, e_2\rangle|^2,
\end{eqnarray*}
and
\begin{eqnarray*}
\sum_{j=1}^{\infty}\|\Lambda_jf\|^2&=&\|\Lambda_1f\|^2=
\|\langle f, e_1\rangle e_1+\langle f, e_2\rangle e_2+\langle f, e_3\rangle e_3\|^2\\
&=&|\langle f, e_1\rangle|^2+|\langle f, e_2\rangle|^2+|\langle f, e_3\rangle|^2 \geq \|K^*f\|^2.
\end{eqnarray*}
Therefore, for any $f\in \mathcal{H}$
\begin{eqnarray*}
\|K^*f\|^2\leq \sum_{j=1}^{\infty}\|\Lambda_jf\|^2 \leq \|f\|^2,
\end{eqnarray*}
as desired.
The next Propositions reads as follows:
\begin{prop}
Let $\{\Lambda_j \in \mathcal{B}(\mathcal{H}, \mathcal{H}_j):j\in
J\}$ be a $K$-g-frame for $\mathcal{H}$ with respect to
$\{\mathcal{H}_j\}_{j\in J}$. Then $\{\Lambda_j\}_{j\in J}$ is a
g-frame for $\mathcal{H}$ with respect to $\{\mathcal{H}_j\}_{j\in
J}$ if $K^{\ast}$ is bounded below.
\end{prop}
\begin{proof}
Since $K^{\ast}$ is bounded below, by definition, there exists a
constant $C>0$ such that,
\begin{eqnarray*}
\|K^{\ast}f\|\geq C\|f\|, \forall f\in \mathcal{H}.
\end{eqnarray*}
Therefore, for all $f\in \mathcal{H}$,
\begin{eqnarray*}
AC^2\|f\|^2\leq A\|K^{\ast}f\|^2\leq \sum_{j\in
J}\|\Lambda_{j}f\|^2 \leq B\|f\|^2,
\end{eqnarray*}
that is, $\{\Lambda_j\}_{j\in J}$ is a $g$-frame for $\mathcal{H}$
with respect to $\{\mathcal{H}_j\}_{j\in J}$.
\end{proof}

\begin{prop}
Let $\{\Lambda_j\}_{j\in J}$ be a tight $K$-$g$-frame for
$\mathcal{H}$ with respect to $\{\mathcal{H}_j\}_{j\in J}$ with
$K$-$g$-frame bound $A_1$. Then $\{\Lambda_j\}_{j\in J}$ is a
tight $g$-frame with $g$-frame bound $A_2$ if and only if
$KK^{\ast}=\frac{A_2}{A_1}I_{\mathcal{H}}$.
\end{prop}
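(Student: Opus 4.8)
The plan is to unfold both ``tight'' hypotheses into the single scalar identity they force upon the operator $KK^{\ast}$, and then to recover the operator equation from that scalar identity by a standard polarization argument. Being a tight $K$-$g$-frame with bound $A_1$ means $\sum_{j\in J}\|\Lambda_j f\|^2 = A_1\|K^{\ast}f\|^2$ for every $f\in\mathcal{H}$, while being a tight $g$-frame with bound $A_2$ means $\sum_{j\in J}\|\Lambda_j f\|^2 = A_2\|f\|^2$ for every $f\in\mathcal{H}$. The crucial observation that links the two is that $\|K^{\ast}f\|^2 = \langle K^{\ast}f, K^{\ast}f\rangle = \langle KK^{\ast}f, f\rangle$, so each frame sum is expressible as a quadratic form in $f$.

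For the forward direction, I would equate the two expressions for $\sum_{j\in J}\|\Lambda_j f\|^2$ to obtain $A_1\langle KK^{\ast}f, f\rangle = A_2\langle f, f\rangle$, that is $\langle (A_1 KK^{\ast} - A_2 I_{\mathcal{H}})f, f\rangle = 0$ for all $f\in\mathcal{H}$. Setting $T := A_1 KK^{\ast} - A_2 I_{\mathcal{H}}$, this operator is bounded and self-adjoint, and a bounded self-adjoint operator whose quadratic form vanishes identically must be the zero operator; hence $A_1 KK^{\ast} = A_2 I_{\mathcal{H}}$, which is precisely $KK^{\ast} = \frac{A_2}{A_1} I_{\mathcal{H}}$.

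For the converse, I would simply substitute $KK^{\ast} = \frac{A_2}{A_1} I_{\mathcal{H}}$ into the tight $K$-$g$-frame identity: then $A_1\|K^{\ast}f\|^2 = A_1\langle KK^{\ast}f, f\rangle = A_1 \cdot \frac{A_2}{A_1}\|f\|^2 = A_2\|f\|^2$, so $\sum_{j\in J}\|\Lambda_j f\|^2 = A_2\|f\|^2$ and $\{\Lambda_j\}_{j\in J}$ is a tight $g$-frame with bound $A_2$, as required.

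The argument is essentially routine; the only step that needs care is passing from the vanishing of the quadratic form $\langle Tf, f\rangle$ to the conclusion $T=0$. Over a complex Hilbert space this follows from the polarization identity without even invoking self-adjointness, but since $KK^{\ast}$ (and hence $T$) is automatically self-adjoint, the conclusion is valid regardless. I therefore expect no genuine obstacle beyond correctly recording this standard fact about quadratic forms.
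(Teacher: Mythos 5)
Your proposal is correct and follows essentially the same route as the paper: equate the two tightness identities to get $A_1\langle KK^{\ast}f,f\rangle=A_2\langle f,f\rangle$ for all $f$, pass from equality of quadratic forms to the operator identity, and verify the converse by direct substitution. The only difference is that you spell out the two steps the paper leaves implicit (the polarization/self-adjointness argument giving $T=0$, and the ``straightforward'' converse), which is a welcome completion rather than a departure.
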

\begin{proof}
Let $\{\Lambda_j\}_{j\in J}$ is a tight $g$-frame with bound
$A_2$, then
\begin{eqnarray*}
\sum_{j \in J}\|\Lambda_jf\|^2=A_2\|f\|^2, \,\ \forall f\in
\mathcal{H}.
\end{eqnarray*}
Since $\{\Lambda_j\}_{j\in J}$ is a tight $K$-$g$-frame with bound
$A_1$, we have $A_1\|K^{\ast}f\|^2=A_2\|f\|^2, \,\ \forall f\in
\mathcal{H}$, that is,
\begin{eqnarray*}
\langle KK^{\star}f, f\rangle=\langle \frac{A_2}{A_1}f, f\rangle,
\,\ \forall f\in \mathcal{H}.
\end{eqnarray*}
The converse is straight forward.
\end{proof}

The following Theorem is analog
of ~\cite[Theorem (3.3)]{Casazza.Kutyniok.Li} to obtain new
$K$-$g$-frame for $\mathcal{H}$:
\begin{thm}
Let $K \in \mathcal{B}(\mathcal{H})$ and $\Lambda_j \in
\mathcal{B}(\mathcal{H}, \mathcal{H}_j)$ for any $j\in J$. Let
$\{\Gamma_{ij} \in \mathcal{B}(\mathcal{H}_j,
\mathcal{H}_{ij}):i\in I_j\}$ be a $g$-frame for $\mathcal{H}_j$
 with bounds $C_j$ and $D_j$, such that
$$0<C=\inf_{j\in J} C_j \leq \sup_{j\in J} D_j=D<\infty,$$ where
$\{\mathcal{H}_{ij}\}_{i\in I_j}$ is a sequence of closed
subspaces of $\mathcal{H}_j$, for all $j\in J$. Then the following
statements are equivalent;
\begin{enumerate}
\item[(i)]$\{\Lambda_j \in \mathcal{B}(\mathcal{H}, \mathcal{H}_j):j\in
J\}$ is a $K$-$g$-frame for $\mathcal{H}$
\item[(ii)]$\{\Gamma_{ij}\Lambda_j \in \mathcal{B}(\mathcal{H},
\mathcal{H}_{ij}): i\in I_j, j\in J\}$ is a $K$-$g$-frame
for $\mathcal{H}$.
\end{enumerate}
\end{thm}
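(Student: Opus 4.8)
The plan is to reduce the entire equivalence to a single two-sided estimate comparing the frame sum of $\{\Lambda_j\}$ with that of $\{\Gamma_{ij}\Lambda_j\}$; once this estimate is in place, both implications drop out by merely tracking constants against the $K$-$g$-frame inequality \eqref{Def:K-g-frame}.

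First I would fix $f\in\mathcal{H}$ and, for each $j\in J$, observe that $\Lambda_j f$ is an element of $\mathcal{H}_j$. Applying the defining $g$-frame inequality of $\{\Gamma_{ij}\}_{i\in I_j}$ to this particular vector gives $C_j\|\Lambda_j f\|^2\leq\sum_{i\in I_j}\|\Gamma_{ij}\Lambda_j f\|^2\leq D_j\|\Lambda_j f\|^2$. Summing over $j$ and replacing $C_j,D_j$ by their uniform bounds $C=\inf_{j\in J} C_j$ and $D=\sup_{j\in J} D_j$ produces the central estimate
\begin{eqnarray*}
C\sum_{j\in J}\|\Lambda_j f\|^2 \leq \sum_{j\in J}\sum_{i\in I_j}\|\Gamma_{ij}\Lambda_j f\|^2 \leq D\sum_{j\in J}\|\Lambda_j f\|^2, \quad \forall f\in\mathcal{H}.
\end{eqnarray*}

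From here the argument is essentially bookkeeping. For (i) $\Rightarrow$ (ii), I would insert the $K$-$g$-frame bounds $A,B$ of $\{\Lambda_j\}$ into the two ends of this estimate to exhibit $\{\Gamma_{ij}\Lambda_j\}$ as a $K$-$g$-frame with bounds $AC$ and $BD$. For (ii) $\Rightarrow$ (i), I would instead divide through by $C$ and $D$: the left half converts a Bessel bound $B'$ for $\{\Gamma_{ij}\Lambda_j\}$ into the upper bound $B'/C$ for $\{\Lambda_j\}$, while the right half converts the lower bound $A'$ into the lower bound $A'/D$ against $\|K^{\ast}f\|^2$.

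I do not expect a genuine obstacle here. The only subtlety worth flagging is the role of the hypothesis $0<C$: uniform positivity of the infimum is precisely what licenses the division by $C$ in the reverse direction, and it also guarantees that the two frame sums are finite together, so that neither the Bessel bound nor the lower bound degenerates when passing between the two systems.
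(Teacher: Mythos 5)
Your proposal is correct and follows essentially the same route as the paper: apply the $g$-frame inequality of $\{\Gamma_{ij}\}_{i\in I_j}$ to the vector $\Lambda_j f\in\mathcal{H}_j$, sum over $j$, replace $C_j, D_j$ by $C, D$, and track the resulting constants against the $K$-$g$-frame inequality in both directions (yielding bounds $AC, BD$ one way and $A'/D, B'/C$ the other). The only cosmetic difference is that you package the comparison as a single two-sided estimate, whereas the paper writes the upper and lower halves separately within each implication.
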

\begin{proof}
$(i)\Rightarrow (ii)$ Let $\{\Lambda_j \in
\mathcal{B}(\mathcal{H}, \mathcal{H}_j):j\in J\}$ be a
$K$-$g$-frame for $\mathcal{H}$ with bounds $A_1, B_1$. Then for
all $f \in \mathcal{H}$ we have
\begin{align*}
\sum_{j\in J}\sum_{i\in I_j}\|\Gamma_{ij}\Lambda_jf\|^2 \leq
\sum_{j\in J}D_j\|\Lambda_jf\|^2\leq DB_1\|f\|^2, \\
\sum_{j\in J}\sum_{i\in I_j}\|\Gamma_{ij}\Lambda_jf\|^2 \geq
\sum_{j\in J}C_j\|\Lambda_jf\|^2 \geq CA_1\|K^{\ast}f\|^2.
\end{align*}
$(ii)\Rightarrow (i)$ Let $\{\Gamma_{ij}\Lambda_j \in
\mathcal{B}(\mathcal{H}, \mathcal{H}_{ij}): i\in I_j, j\in J\}$ be
a $K$-$g$-frame for $\mathcal{H}$ with bounds $A_2,B_2$. we have
\begin{align*}
\sum_{j\in J}\|\Lambda_jf\|^2 \leq \sum_{j\in
J}\frac{1}{C_i}\sum_{i\in I_j}\|\Gamma_{ij}\Lambda_jf\|^2 \leq
\frac{B_2}{C} \|f\|^2, \\ \sum_{j\in J}\|\Lambda_jf\|^2 \geq
\sum_{j\in J}\frac{1}{D_j}\sum_{i\in
I_j}\|\Gamma_{ij}\Lambda_jf\|^2 \geq \frac{A_2}{D}\|K^{\ast}f\|^2.
\end{align*}
\end{proof}

Recall that, a sequence $\{w_j\}_j$ is called semi-normalized if
there are bounds $b\geq a> 0$, such that $a \leq |w_j|\leq b$.

\begin{prop}
\label{prop.:semi-normal} Suppose that $\{\Lambda_j\}_{j\in J}$ be
a K-g-frame for $\mathcal{H}$ with respect to
$\{\mathcal{H}_j\}_{j\in J}$ with bounds $A,B$ and $K$-g-dual
$\{\Theta_j\}_{j\in J}$. Let $\{w_j\}_{j\in J}$ be a
semi-normalized sequence with bounds $a, b$. Then
$\{w_j\Lambda_j\}_{j\in J}$ is a K-g-frame with bounds $a^2A$ and
$b^2B$. The sequence $\{w_j^{-1}\Theta_j\}_{j\in J}$ is a K-g-dual
of $\{w_j\Lambda_j\}_{j\in J}$.
\end{prop}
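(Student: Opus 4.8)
The plan is to treat the two assertions separately: first the frame-bound estimate for $\{w_j\Lambda_j\}_{j\in J}$, and then the duality relation for the pair $\{w_j\Lambda_j\}_{j\in J}$ and $\{w_j^{-1}\Theta_j\}_{j\in J}$.

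\textbf{Step 1 (frame bounds).} For the first claim I would argue directly from the $K$-$g$-frame inequality \eqref{Def:K-g-frame} for $\{\Lambda_j\}_{j\in J}$. Since each $w_j$ is a scalar, $\|w_j\Lambda_j f\|^2=|w_j|^2\,\|\Lambda_j f\|^2$ for every $j$ and every $f\in\mathcal{H}$. Using the semi-normalization $a\leq|w_j|\leq b$, hence $a^2\leq|w_j|^2\leq b^2$, I would sandwich the weighted sum,
$$a^2\sum_{j\in J}\|\Lambda_j f\|^2\ \leq\ \sum_{j\in J}\|w_j\Lambda_j f\|^2\ \leq\ b^2\sum_{j\in J}\|\Lambda_j f\|^2,$$
and then insert the lower bound $A\|K^{\ast}f\|^2\leq\sum_{j\in J}\|\Lambda_j f\|^2$ on the left and the upper bound $\sum_{j\in J}\|\Lambda_j f\|^2\leq B\|f\|^2$ on the right to obtain
$$a^2A\,\|K^{\ast}f\|^2\ \leq\ \sum_{j\in J}\|w_j\Lambda_j f\|^2\ \leq\ b^2B\,\|f\|^2,\qquad\forall f\in\mathcal{H}.$$
This is exactly the statement that $\{w_j\Lambda_j\}_{j\in J}$ is a $K$-$g$-frame with bounds $a^2A$ and $b^2B$.

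\textbf{Step 2 (dual relation).} For the second claim I would first observe that $b^{-1}\leq|w_j^{-1}|\leq a^{-1}$, so the upper-bound estimate of Step 1, applied to the $g$-Bessel sequence $\{\Theta_j\}_{j\in J}$, shows that $\{w_j^{-1}\Theta_j\}_{j\in J}$ is again $g$-Bessel; this guarantees convergence of the series defining the dual pairing. The heart of the matter is to verify that the defining $K$-$g$-dual identity is preserved under the weighting. The key observation is that the scalar factors cancel: for real weights,
$$(w_j^{-1}\Theta_j)^{\ast}(w_j\Lambda_j)=\overline{w_j^{-1}}\,w_j\,\Theta_j^{\ast}\Lambda_j=\Theta_j^{\ast}\Lambda_j,$$
since $\overline{w_j^{-1}}\,w_j=1$. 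Summing over $j$ then gives
$$\sum_{j\in J}(w_j^{-1}\Theta_j)^{\ast}(w_j\Lambda_j)f=\sum_{j\in J}\Theta_j^{\ast}\Lambda_j f=Kf,\qquad\forall f\in\mathcal{H},$$
where the final equality is precisely the hypothesis that $\{\Theta_j\}_{j\in J}$ is a $K$-$g$-dual of $\{\Lambda_j\}_{j\in J}$. Hence $\{w_j^{-1}\Theta_j\}_{j\in J}$ satisfies the $K$-$g$-dual relation for $\{w_j\Lambda_j\}_{j\in J}$.

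The routine part is Step 1, which is just a two-sided estimate built from the semi-normalization constants. The step that needs care is the scalar bookkeeping in Step 2: the cancellation $\overline{w_j^{-1}}\,w_j=1$ is immediate for real $w_j$, while for complex weights the ratio $w_j/\overline{w_j}$ only has modulus one, so one should either restrict to real-valued $\{w_j\}_{j\in J}$ or check that the duality convention in force is insensitive to such unimodular factors. Pinning down which normalization of the $K$-$g$-dual relation is used (reconstruction of $Kf$ rather than of $f$) is the only genuine obstacle; once that is fixed, the cancellation argument closes the proof.
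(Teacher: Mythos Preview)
Your argument is essentially the paper's own proof: the same sandwich estimate in Step~1 and the same scalar cancellation in Step~2, together with the observation that $\{w_j^{-1}\Theta_j\}_{j\in J}$ is $g$-Bessel because $\{w_j^{-1}\}$ is bounded. Two small points of alignment are worth noting.

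First, the convention issue you flag is real, and you guessed the wrong one. In this paper the $K$-$g$-dual relation (Definition~3.1, derived from Theorem~\ref{Thm.rep.1}) is $T_{\Lambda}T_{\Theta}^{\ast}=I_{\mathcal{R}(K)}$, i.e.\ $\sum_{j}\Lambda_j^{\ast}\Theta_j f=f$ for $f\in\mathcal{R}(K)$, not $\sum_{j}\Theta_j^{\ast}\Lambda_j f=Kf$. Accordingly the paper checks $\sum_{j}(w_j\Lambda_j)^{\ast}(w_j^{-1}\Theta_j)f=\sum_j\Lambda_j^{\ast}\Theta_j f=f$; your computation should be rewritten in that order, but the cancellation mechanism is identical.

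Second, your caution about complex weights is well taken: the identity $(w_j\Lambda_j)^{\ast}(w_j^{-1}\Theta_j)=\Lambda_j^{\ast}\Theta_j$ uses $\overline{w_j}\,w_j^{-1}=1$, which holds only for real $w_j$. The paper's proof tacitly makes the same assumption, so this is not a defect of your argument relative to the paper---it is a shared implicit hypothesis.
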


\begin{proof}
Since $\sum_{j\in J}\|w_j\Lambda_jf\|^2 = \sum_{j\in
J}|w_j|^2\|\Lambda_jf\|^2$. Then
\begin{eqnarray*}
a^2\sum_{j\in J}\|\Lambda_jf\|^2 \leq \sum_{j\in J}\|w_j\Lambda_jf\|^2 \leq b^2\sum_{j\in
J}\|\Lambda_jf\|^2,
\end{eqnarray*}
that is, $a^2A\|K^{\ast}f\|^2 \leq \sum_{j\in
J}\|w_j\Lambda_jf\|^2 \leq
b^2B\|f\|^2$.\\
We have $\sum_{j\in J}(w_j\Lambda_j)^{\ast}(w_j^{-1}\Theta_j)f=
\sum_{j\in J}\Lambda_j^{\ast}\Theta_jf=f$. Since $\{w_j^{-1}\}_j$
is bounded, $\{w_j^{-1}\Theta_j\}_j$ is a $g$-Bessel sequence.
Therefore, it is a $K$-$g$-dual of $\{w_j\Lambda_j\}_{j\in J}$.
\end{proof}
\bigskip
Suppose that $\{\Lambda_j\}_{j\in J}$ is a $K$-$g$-frame for
$\mathcal{H}$ with respect to $\{\mathcal{H}_j\}_{j\in J}$.
Obviously, it is a $g$-Bessel sequence, so we can define the
bounded linear operator
$T_{\Lambda}:\ell^2\left(\{\mathcal{H}_j\}_{j\in
J}\right)\rightarrow \mathcal{H}$ as follows:
\begin{eqnarray}
T_{\Lambda}(\{g_j\}_{j\in J})=\sum_{j\in J}\Lambda_j^{\ast}g_j,
\,\ \forall \{g_j\}_{j\in J}\in \ell^2
\left(\{\mathcal{H}_j\}_{j\in J}\right).
\end{eqnarray}
The operator $T_{\Lambda}$ is called the synthesis operator (or
pre-frame operator) for the $K$-$g$-frame $\{\Lambda_j\}_{j\in
J}$. The adjoint operator

\begin{eqnarray}
T_{\Lambda}^{\ast}: \mathcal{H}\rightarrow
l^2(\{\mathcal{H}_j\}_{j\in J}),\quad\
T_{\Lambda}^{\ast}f=\{\Lambda_jf\}_{j\in J}, \,\ \forall f\in
\mathcal{H},
\end{eqnarray}
is called analysis operator for the $K$-$g$-frame
$\{\Lambda_j\}_{j\in J}$. Let
$S_{\Lambda}=T_{\Lambda}T_{\Lambda}^{\ast}$, we obtain the frame
operator for the $K$-$g$-frame $\{\Lambda_j\}_{j\in J}$ as follows
\begin{eqnarray}
S_{\Lambda}:\mathcal{H}\rightarrow \mathcal{H},\quad\
S_{\Lambda}f=\sum_{j\in J}\Lambda_j^{\ast}\Lambda_jf, \,\ \forall
f \in \mathcal{H}.
\end{eqnarray}
%It is well known that, the frame operator $S_{\Lambda}$, need not
%be invertible on $\mathcal{H}$ in general, but it is invertible on
%subspace $\mathcal{R}(K)\subset \mathcal{H}$.
One of the main problem in the $K$-$g$-frame theory is that the
interchangeability of two $g$-Bessel sequence with respect to a
$K$-$g$-frame is different from a $g$-frame. The following
characterization of $K$-$g$-frames is in \cite[Theorem
(2.5)]{Asgari.Rahimi}.
\begin{prop}
Let $K \in \mathcal{B}(\mathcal{H})$. Then the following are
equivalent:
\begin{enumerate}
\item[(i)] $\{\Lambda_j\}_{j\in J}$ is a $K$-$g$-frame for
$\mathcal{H}$ with respect to $\{\mathcal{H}_j\}_{j\in J}$;
\item[(ii)] $\{\Lambda_j\}_{j\in J}$ is a $g$-Bessel sequence for
$\mathcal{H}$ with respect to $\{\mathcal{H}_j\}_{j\in J}$
and there exists a $g$-Bessel sequence
$\{\Gamma_j\}_{j\in J}$ for $\mathcal{H}$ with respect to
$\{\mathcal{H}_j\}_{j\in J}$ such that
\begin{eqnarray}\label{eqn:g-frame.1}
Kf=\sum_{j\in J}\Lambda_j^{\ast}\Gamma_jf,\,\
\forall f\in \mathcal{H}.
\end{eqnarray}
\end{enumerate}
\end{prop}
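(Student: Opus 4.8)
The plan is to prove the equivalence $(i)\Leftrightarrow(ii)$ by relating the $K$-$g$-frame lower bound to an operator range inclusion, using the synthesis operators of the two $g$-Bessel sequences. The central tool is the standard operator-theoretic lemma (often attributed to Douglas) stating that for bounded operators $U,V$ on Hilbert spaces, the range inclusion $\mathcal{R}(U)\subseteq\mathcal{R}(V)$ holds if and only if $U=VW$ for some bounded operator $W$, and that this is in turn equivalent to an inequality $\|U^{\ast}f\|\le\lambda\|V^{\ast}f\|$ for all $f$ and some $\lambda>0$.

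For $(ii)\Rightarrow(i)$, I would argue as follows. Since $\{\Lambda_j\}_{j\in J}$ is assumed $g$-Bessel, the upper bound in \eqref{Def:K-g-frame} is immediate. For the lower bound, rewrite \eqref{eqn:g-frame.1} as $K=T_{\Lambda}\,T_{\Gamma}^{\ast}$, where $T_{\Lambda}$ is the synthesis operator of $\{\Lambda_j\}$ and $T_{\Gamma}^{\ast}$ is the analysis operator of the auxiliary $g$-Bessel sequence $\{\Gamma_j\}$. Taking adjoints gives $K^{\ast}=T_{\Gamma}\,T_{\Lambda}^{\ast}$, so for every $f\in\mathcal{H}$,
\begin{eqnarray*}
\|K^{\ast}f\|^2=\|T_{\Gamma}T_{\Lambda}^{\ast}f\|^2\le\|T_{\Gamma}\|^2\,\|T_{\Lambda}^{\ast}f\|^2=\|T_{\Gamma}\|^2\sum_{j\in J}\|\Lambda_jf\|^2.
\end{eqnarray*}
Since $\{\Gamma_j\}$ is $g$-Bessel, $T_{\Gamma}$ is bounded, and setting $A=\|T_{\Gamma}\|^{-2}$ (nonzero unless $T_\Gamma=0$, a degenerate case handled separately) yields $A\|K^{\ast}f\|^2\le\sum_{j\in J}\|\Lambda_jf\|^2$. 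This is the quick direction.

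For $(i)\Rightarrow(ii)$, the Bessel property of $\{\Lambda_j\}$ is part of the definition, so the real content is producing the sequence $\{\Gamma_j\}$ satisfying \eqref{eqn:g-frame.1}. Here I would pass through the range inclusion. The lower $K$-$g$-frame inequality $A\|K^{\ast}f\|^2\le\|T_{\Lambda}^{\ast}f\|^2$ is exactly the Douglas-type estimate $\|K^{\ast}f\|\le A^{-1/2}\|T_{\Lambda}^{\ast}f\|$, which gives $\mathcal{R}(K)\subseteq\mathcal{R}(T_{\Lambda})$ and hence a bounded $W:\mathcal{H}\to\ell^2(\{\mathcal{H}_j\}_{j\in J})$ with $K=T_{\Lambda}W$. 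Writing $Wf=\{\Gamma_jf\}_{j\in J}$ componentwise defines operators $\Gamma_j\in\mathcal{B}(\mathcal{H},\mathcal{H}_j)$, and boundedness of $W$ is precisely the $g$-Bessel condition for $\{\Gamma_j\}$. Then $Kf=T_{\Lambda}Wf=\sum_{j\in J}\Lambda_j^{\ast}\Gamma_jf$, which is \eqref{eqn:g-frame.1}.

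The main obstacle is the correct invocation of the range-inclusion/factorization lemma and the verification that the factoring operator $W$ decomposes coordinatewise into genuine elements of $\mathcal{B}(\mathcal{H},\mathcal{H}_j)$ with uniformly controlled Bessel bound; this is where one must be careful that the abstract factorization actually lands in the sequence space $\ell^2(\{\mathcal{H}_j\}_{j\in J})$ and that the synthesis identity holds with unconditional convergence. Since this characterization is cited from \cite[Theorem (2.5)]{Asgari.Rahimi}, one could alternatively simply invoke that reference, but the self-contained argument above via $K=T_{\Lambda}T_{\Gamma}^{\ast}$ and the Douglas factorization is the cleaner route.
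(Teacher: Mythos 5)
Your argument is correct, but there is nothing in the paper to compare it against: the paper does not prove this proposition at all, stating it as a known characterization and attributing it to \cite[Theorem (2.5)]{Asgari.Rahimi}. Both of your directions are sound. For (ii)$\Rightarrow$(i), the identity $K=T_{\Lambda}T_{\Gamma}^{\ast}$ gives $K^{\ast}=T_{\Gamma}T_{\Lambda}^{\ast}$ and hence $\|K^{\ast}f\|^{2}\le\|T_{\Gamma}\|^{2}\sum_{j\in J}\|\Lambda_{j}f\|^{2}$; a small improvement is to bound $\|T_{\Gamma}\|^{2}$ by the $g$-Bessel bound $B_{\Gamma}$ of $\{\Gamma_j\}_{j\in J}$ and take $A=B_{\Gamma}^{-1}$, which makes your ``degenerate case'' $T_{\Gamma}=0$ unnecessary (and if $K=0$ the lower inequality is vacuous anyway). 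For (i)$\Rightarrow$(ii), your route through the Douglas factorization lemma is exactly the standard proof in the literature the paper cites: the lower $K$-$g$-frame inequality is the Douglas condition $KK^{\ast}\le A^{-1}T_{\Lambda}T_{\Lambda}^{\ast}$, which produces a bounded $W$ with $K=T_{\Lambda}W$, and reading off the coordinates of $W$ gives $\Gamma_{j}f=(Wf)_{j}$ with $\sum_{j\in J}\|\Gamma_{j}f\|^{2}=\|Wf\|^{2}\le\|W\|^{2}\|f\|^{2}$, so the Bessel bound and the unconditional convergence of $\sum_{j\in J}\Lambda_{j}^{\ast}\Gamma_{j}f$ come for free from $Wf\in\ell^{2}\left(\{\mathcal{H}_{j}\}_{j\in J}\right)$. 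The one point you should make explicit is that you need the version of Douglas' lemma for operators between \emph{different} Hilbert spaces ($T_{\Lambda}$ maps $\ell^{2}\left(\{\mathcal{H}_{j}\}_{j\in J}\right)$ into $\mathcal{H}$), not the classical same-space statement; this generalization is standard and is the form used in \cite{Gavruta} and \cite{Asgari.Rahimi}. In short, your proposal supplies a self-contained proof where the paper merely cites one, and it reconstructs essentially the argument of the cited source.
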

The position of two $g$-Bessel sequence $\{\Lambda_j\}_{j\in J}$
and $\{\Gamma_j\}_{j\in J}$ in (\ref{eqn:g-frame.1}) are not
interchangeable in general, but there exists another type of dual
such that $\{\Lambda_j\}_{j\in J}$ and a sequence derived by
$\{\Gamma_j\}_{j\in J}$ are interchangeable on the subspace
$\mathcal{R}(K)$ of $\mathcal{H}$.

For $K\in \mathcal{B}(\mathcal{H})$, let $\mathcal{R}(K)$ is
closed, then the pseudo-inverse $K^{\dagger}$ of $K$ exists.
\begin{thm}\cite[Theorem (3.3)]{Hua.Huang}
\label{Thm.rep.1}
Suppose that $\{\Lambda_j\}_{j\in J}$ and
$\{\Gamma_j\}_{j\in J}$ are g-Bessel sequence as in
(\ref{eqn:g-frame.1}). Then there exists a sequence
$\{\Theta_j\}_{j\in J}$ such that
\begin{eqnarray}\label{eqn:g-frame.2}
f=\sum_{j\in J}\Lambda_j^{\ast}\Theta_jf, \,\ \forall f \in
\mathcal{R}(K),
\end{eqnarray}
where $\Theta_j=\Gamma_j(K^{\dagger}\mid_{\mathcal{R}(K)})$.
Moreover, $\{\Lambda_j\}_{j\in J}$ and $\{\Theta_j\}_{j\in J}$ are
interchangeable for any $f \in \mathcal{R}(K)$, that is,
\begin{eqnarray}\label{eqn:g-frame.3}
f=\sum_{j\in J}\Theta_j^{\ast}\Lambda_jf, \,\ \forall f \in
\mathcal{R}(K).
\end{eqnarray}
\end{thm}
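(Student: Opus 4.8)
The plan is to take $\Theta_j:=\Gamma_j K^{\dagger}$ (viewed on $\mathcal{R}(K)$ as in the statement, noting that $K^{\dagger}$ is defined on all of $\mathcal{H}$ and agrees there with its restriction) and to verify the two representation formulas directly from the Moore--Penrose identities for the closed-range operator $K$. Because $\mathcal{R}(K)$ is closed, $K^{\dagger}$ is bounded and $KK^{\dagger}=P_{\mathcal{R}(K)}$, the orthogonal projection onto $\mathcal{R}(K)$; in particular $KK^{\dagger}f=f$ whenever $f\in\mathcal{R}(K)$. Since $\{\Gamma_j\}_{j\in J}$ is $g$-Bessel and $K^{\dagger}$ is bounded, $\{\Theta_j\}_{j\in J}$ is again $g$-Bessel, so every series written below converges. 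I read the hypothesis (\ref{eqn:g-frame.1}) as the operator identity $K=\sum_{j\in J}\Lambda_j^{\ast}\Gamma_j$.

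First I would prove (\ref{eqn:g-frame.2}). Fixing $f\in\mathcal{R}(K)$, I substitute the definition of $\Theta_j$ and apply (\ref{eqn:g-frame.1}) to the vector $K^{\dagger}f$:
\begin{eqnarray*}
\sum_{j\in J}\Lambda_j^{\ast}\Theta_j f
=\sum_{j\in J}\Lambda_j^{\ast}\Gamma_j\left(K^{\dagger}f\right)
=KK^{\dagger}f=P_{\mathcal{R}(K)}f=f .
\end{eqnarray*}
This is the only step that uses $f\in\mathcal{R}(K)$, and it is exactly what forces the restriction to $\mathcal{R}(K)$.

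For the interchangeability (\ref{eqn:g-frame.3}) I would first take adjoints in the operator identity above, obtaining $K^{\ast}=\sum_{j\in J}\Gamma_j^{\ast}\Lambda_j$. Next I compute the adjoint of $\Theta_j=\Gamma_j K^{\dagger}$ as $\Theta_j^{\ast}=(K^{\dagger})^{\ast}\Gamma_j^{\ast}=(K^{\ast})^{\dagger}\Gamma_j^{\ast}$, using the standard closed-range identity $(K^{\dagger})^{\ast}=(K^{\ast})^{\dagger}$. Pulling the bounded operator $(K^{\ast})^{\dagger}$ out of the convergent sum then gives, for $f\in\mathcal{R}(K)$,
\begin{eqnarray*}
\sum_{j\in J}\Theta_j^{\ast}\Lambda_j f
=(K^{\ast})^{\dagger}\sum_{j\in J}\Gamma_j^{\ast}\Lambda_j f
=(K^{\ast})^{\dagger}K^{\ast}f=P_{\mathcal{R}(K)}f=f ,
\end{eqnarray*}
where the penultimate equality is $(K^{\ast})^{\dagger}K^{\ast}=P_{\mathcal{R}((K^{\ast})^{\ast})}=P_{\mathcal{R}(K)}$.

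The two displayed computations are routine; the part that needs care, and which I expect to be the main obstacle, is the correct bookkeeping of the pseudo-inverse relations for $K$ and $K^{\ast}$ --- namely $(K^{\dagger})^{\ast}=(K^{\ast})^{\dagger}$ together with the projection identities $KK^{\dagger}=P_{\mathcal{R}(K)}$ and $(K^{\ast})^{\dagger}K^{\ast}=P_{\mathcal{R}(K)}$. These are precisely the facts requiring $\mathcal{R}(K)$ (equivalently $\mathcal{R}(K^{\ast})$) to be closed, so the closedness hypothesis is used essentially, not merely to guarantee that $K^{\dagger}$ exists.
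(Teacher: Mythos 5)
Your proof is correct, but there is nothing in the paper to compare it against: the statement is quoted with a citation to Hua and Huang \cite[Theorem (3.3)]{Hua.Huang}, and no proof is reproduced in this paper. Judged on its own merits, your argument is sound: for (\ref{eqn:g-frame.2}) the computation $\sum_{j\in J}\Lambda_j^{\ast}\Theta_jf=\sum_{j\in J}\Lambda_j^{\ast}\Gamma_j(K^{\dagger}f)=KK^{\dagger}f=f$ for $f\in\mathcal{R}(K)$ is exactly right, and your handling of (\ref{eqn:g-frame.3}) via $(K^{\dagger})^{\ast}=(K^{\ast})^{\dagger}$ and $(K^{\ast})^{\dagger}K^{\ast}=P_{\mathcal{R}(K)}$ (the orthogonal projection onto $\mathcal{R}(K)$) is valid. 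Two remarks. First, the step ``take adjoints in the operator identity'' silently uses that $\sum_{j\in J}\Lambda_j^{\ast}\Gamma_j$ is the bounded operator $T_{\Lambda}T_{\Gamma}^{\ast}$ and that its adjoint $T_{\Gamma}T_{\Lambda}^{\ast}$ is again given by a strongly convergent series $\sum_{j\in J}\Gamma_j^{\ast}\Lambda_j$; this is where both Bessel hypotheses enter and deserves a line. Second, the pseudo-inverse bookkeeping that you single out as the main obstacle can be bypassed entirely: once (\ref{eqn:g-frame.2}) is established it says precisely that $T_{\Lambda}T_{\Theta}^{\ast}\colon\mathcal{R}(K)\to\mathcal{H}$ equals the inclusion map $\iota$ of $\mathcal{R}(K)$ into $\mathcal{H}$, and taking the adjoint of this single identity gives $T_{\Theta}T_{\Lambda}^{\ast}=\iota^{\ast}=P_{\mathcal{R}(K)}$ viewed as a map into $\mathcal{R}(K)$, whose restriction to $\mathcal{R}(K)$ is $I_{\mathcal{R}(K)}$ --- which is (\ref{eqn:g-frame.3}). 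This shortcut, which is the mechanism the paper itself alludes to after its Definition of duals (``the given conditions are equivalent, by taking adjoint''), needs only the adjoint of the inclusion and no identities for $(K^{\ast})^{\dagger}$ at all; your route proves the same thing with more machinery, all of it correctly applied.
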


%==============================================================

\section{\bf Approximate $K$-$g$-duals}

Motivated by the concept of approximate dual
frames in ~\cite{Christensen.Laugesen}, we will define and focus
on the approximate dual $K$-$g$-frames for $\mathcal{H}$ with
respect to $\{H_j\}_{j\in J}$.

By the Theorem \ref{Thm.rep.1}, since
$K^{\dagger}\mid_{\mathcal{R}(K)}:\mathcal{R}(K)\rightarrow
\mathcal{H}$, we obtain $\Theta_j:\mathcal{R}(K)\rightarrow
\mathcal{H}_j$. For any $f \in \mathcal{R}(K)$, we have
\begin{eqnarray}
\sum_{j\in J}\|\Theta_jf\|^2= \sum_{j\in J}\|\Gamma_j
K^{\dagger}f\|^2\leq B\|K^{\dagger}f\|^2\leq
B\|K^{\dagger}\|^2\|f\|^2.
\end{eqnarray}
That is, $\{\Theta_j\}_{j\in J}$ is a $g$-Bessel sequence for
$\mathcal{R}(K)$ with respect to $\{\mathcal{H}_j\}_{j\in J}$. Let
$T_{\Theta}$ be the synthesis operator of $\{\Theta_j\}_{j\in J}$.
Consider two mixed operators $T_{\Lambda}T_{\Theta}^{\ast}$ and
$T_{\Theta}T_{\Lambda}^{\ast}$ as follows:
\begin{eqnarray*}\label{eqn:Appr.2}
T_{\Lambda}T_{\Theta}^{\ast}:\mathcal{R}(K)\rightarrow
\mathcal{H}, \,\
T_{\Lambda}T_{\Theta}^{\ast}f=\sum_{}\Lambda_j^{\ast}\Theta_jf,
\,\ \forall f \in \mathcal{R}(K),
\end{eqnarray*}
\begin{eqnarray*}
T_{\Theta}T_{\Lambda}^{\ast}:\mathcal{H}\rightarrow
\mathcal{R}(K), \,\
T_{\Theta}T_{\Lambda}^{\ast}f=\sum_{j}\Theta_j^{\ast}\Lambda_jf,
\,\ \forall f \in \mathcal{H}.
\end{eqnarray*}
By the Theorem \ref{Thm.rep.1} and similar to the definition of
duality and approximate duality stated in
~\cite{Christensen.Laugesen}, we have the following definition:

\begin{defn}
Consider two g-Bessel sequences $\{\Lambda_j \in
\mathcal{B}(\mathcal{H},\mathcal{H}_j): j\in J\}$ and $\{\Theta_j
\in \mathcal{B}(\mathcal{H},\mathcal{H}_j): j\in J\}$.
\begin{enumerate}
\item[(i)] The sequences $\{\Lambda_j\}_{j\in J}$ and
$\{\Theta_j\}_{j\in J}$ are dual K-g-frames, when
$T_{\Lambda}T_{\Theta}^{\ast}=I_{\mathcal{R}(K)}$ or
$T_{\Theta}T_{\Lambda}^{\ast}|_{\mathcal{R}(K)}=I_{\mathcal{R}(K)}$.
In this case, we say that $\{\Theta_j\}_{j\in J}$ is a
K-g-dual of $\{\Lambda_j\}_{j\in J}$,
\item[(ii)] The sequences $\{\Lambda_j\}_{j\in J}$ and
$\{\Theta_j\}_{j\in J}$ are approximately dual K-g-frames,
whenever $\|I_{\mathcal{R}(K)}-T_{\Lambda}T_{\Theta}^{\ast}\|<1$ or
$\|I_{\mathcal{R}(K)}-T_{\Theta}T_{\Lambda}^{\ast}|_{\mathcal{R}(K)}\|<1$.
In this case, we say that $\{\Theta_j\}_{j\in J}$ is an
approximate dual K-g-frame or approximate K-g-dual of
$\{\Lambda_j\}_{j\in J}$.
\end{enumerate}

\end{defn}
Note that the given conditions are equivalent, by taking adjoint.
A well-known algorithm to find the inverse of an operator is the
Neumann series algorithm.\\ Since
$\|I_{\mathcal{R}(K)}-T_{\Lambda}T_{\Theta}^{\ast}\|<1$,
$T_{\Lambda}T_{\Theta}^{\ast}$ is invertible with
$$(T_{\Lambda}T_{\Theta}^{\ast})^{-1}=
(I_{\mathcal{R}(K)}-(I_{\mathcal{R}(K)}-T_{\Lambda}T_{\Theta}^{\ast}))^{-1}
=\sum_{n=0}^{\infty}(I_{\mathcal{R}(K)}-T_{\Lambda}T_{\Theta}^{\ast})^n.$$
Therefore, every $f \in \mathcal{R}(K)$ can be reconstruct as

\begin{eqnarray}\label{eqn:Appr.3}
f=\sum_{n=0}^{\infty}(I_{\mathcal{R}(K)}-T_{\Lambda}T_{\Theta}^{\ast})^n
T_{\Lambda}T_{\Theta}^{\ast}f.
\end{eqnarray}
The following Propositions is the analog of Prop. 3.4 and Prop.
4.1 in ~\cite{Christensen.Laugesen} to obtain new approximate
$K$-$g$-duals.
\begin{prop}
Let $\{\Theta_j\}_{j\in J}$ be an approximate $K$-$g$-dual of
$\{\Lambda_j\}_{j\in J}$, then
$\{\Theta_j(T_{\Lambda}T_{\Theta}^{\ast})^{-1}\}$ is a K-g-dual of
$\{\Lambda_j\}_{j\in J}$.
\end{prop}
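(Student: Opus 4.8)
The plan is to set $\Psi_j := \Theta_j (T_{\Lambda}T_{\Theta}^{\ast})^{-1}$ and to verify directly that $\{\Psi_j\}_{j\in J}$ satisfies the defining identity of a $K$-$g$-dual, namely $T_{\Lambda}T_{\Psi}^{\ast}=I_{\mathcal{R}(K)}$. Since $\{\Theta_j\}_{j\in J}$ is an approximate $K$-$g$-dual, we have $\|I_{\mathcal{R}(K)}-T_{\Lambda}T_{\Theta}^{\ast}\|<1$, so the operator $T_{\Lambda}T_{\Theta}^{\ast}:\mathcal{R}(K)\to\mathcal{R}(K)$ is invertible via the Neumann series already recorded in (\ref{eqn:Appr.3}), and its inverse is a bounded operator on $\mathcal{R}(K)$. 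Hence each $\Psi_j$ is a well-defined bounded operator from $\mathcal{R}(K)$ into $\mathcal{H}_j$.

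First I would confirm that $\{\Psi_j\}_{j\in J}$ is $g$-Bessel, so that its synthesis operator $T_{\Psi}$ exists. This is immediate: for $f\in\mathcal{R}(K)$,
\[
\sum_{j\in J}\|\Psi_j f\|^2=\sum_{j\in J}\|\Theta_j (T_{\Lambda}T_{\Theta}^{\ast})^{-1}f\|^2\leq B\,\|(T_{\Lambda}T_{\Theta}^{\ast})^{-1}\|^2\,\|f\|^2,
\]
where $B$ is a $g$-Bessel bound of $\{\Theta_j\}_{j\in J}$; the bounded factor $(T_{\Lambda}T_{\Theta}^{\ast})^{-1}$ only rescales the Bessel bound.

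The heart of the argument is then a one-line computation. For any $f\in\mathcal{R}(K)$ I would write
\[
T_{\Lambda}T_{\Psi}^{\ast}f=\sum_{j\in J}\Lambda_j^{\ast}\Psi_j f=\sum_{j\in J}\Lambda_j^{\ast}\Theta_j\bigl((T_{\Lambda}T_{\Theta}^{\ast})^{-1}f\bigr)=T_{\Lambda}T_{\Theta}^{\ast}\bigl((T_{\Lambda}T_{\Theta}^{\ast})^{-1}f\bigr)=f,
\]
where the third equality is just the definition of the mixed operator $T_{\Lambda}T_{\Theta}^{\ast}g=\sum_{j}\Lambda_j^{\ast}\Theta_j g$ applied to $g=(T_{\Lambda}T_{\Theta}^{\ast})^{-1}f$. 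Thus $T_{\Lambda}T_{\Psi}^{\ast}=I_{\mathcal{R}(K)}$, which is exactly condition (i) of the definition of a $K$-$g$-dual; taking adjoints gives the equivalent form $T_{\Psi}T_{\Lambda}^{\ast}|_{\mathcal{R}(K)}=I_{\mathcal{R}(K)}$ for free.

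I expect the only real care to be the bookkeeping of domains: every operator here lives over the closed subspace $\mathcal{R}(K)$, and one must note that $(T_{\Lambda}T_{\Theta}^{\ast})^{-1}$ maps $\mathcal{R}(K)$ back into $\mathcal{R}(K)$, so that applying $\Theta_j$ afterwards is legitimate and the composite $\Psi_j$ has domain $\mathcal{R}(K)$. Pulling the vector $(T_{\Lambda}T_{\Theta}^{\ast})^{-1}f$ through the sum $\sum_j\Lambda_j^{\ast}\Theta_j(\cdot)$ is justified by the convergence already guaranteed by the $g$-Bessel bounds of $\{\Lambda_j\}$ and $\{\Theta_j\}$. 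There is no genuine analytic obstacle beyond these routine checks; the entire substance of the statement is the invertibility of $T_{\Lambda}T_{\Theta}^{\ast}$ on $\mathcal{R}(K)$ supplied by the approximate-dual hypothesis.
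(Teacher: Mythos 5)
Your proof is correct and follows essentially the same route as the paper's: both check that $\{\Theta_j(T_{\Lambda}T_{\Theta}^{\ast})^{-1}\}_{j\in J}$ is $g$-Bessel and then obtain $f=\sum_{j\in J}\Lambda_j^{\ast}\Theta_j\bigl((T_{\Lambda}T_{\Theta}^{\ast})^{-1}f\bigr)$ by applying the mixed operator $T_{\Lambda}T_{\Theta}^{\ast}$ to the vector $(T_{\Lambda}T_{\Theta}^{\ast})^{-1}f$, whose existence comes from the Neumann-series invertibility supplied by the approximate-dual hypothesis. The only cosmetic differences are that the paper additionally rewrites the inverse as $\sum_{n=0}^{\infty}(I_{\mathcal{R}(K)}-T_{\Lambda}T_{\Theta}^{\ast})^{n}$, whereas you leave it abstract and instead make explicit the Bessel bound and the domain bookkeeping that the paper dismisses as ``easy to see.''
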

\begin{proof}
It is easy to see that
$\{\Theta_j(T_{\Lambda}T_{\Theta}^{\ast})^{-1}\}_{j\in J}$ is a
$g$-Bessel sequence and
\begin{eqnarray*}
f=(T_{\Lambda}T_{\Theta}^{\ast})(T_{\Lambda}T_{\Theta}^{\ast})^{-1}f
& = &
\sum_{j=0}^{\infty}\Lambda_j^{\ast}\Theta_j(T_{\Lambda}T_{\Theta}^{\ast})^{-1}f\\
& = &
\sum_{j=0}^{\infty}\Lambda_j^{\ast}(\Theta_j\sum_{n=0}^{\infty}
(I_{\mathcal{R}(K)}-T_{\Lambda}T_{\Theta}^{\ast})^nf).
\end{eqnarray*}
Therefore,
$\{\Theta_j(T_{\Lambda}T_{\Theta}^{\ast})^{-1}\}=\{\Theta_j\sum_{n=0}^{\infty}
(I_{\mathcal{R}(K)}-T_{\Lambda}T_{\Theta}^{\ast})^n\}_{j\in J}$ is
a $K$-$g$-dual of $\{\Lambda_j\}_{j\in J}$.
\end{proof}
For each $N\in \mathbb{N}$, define
$\gamma_j^{(N)}=\sum_{n=0}^N\Theta_j(I_{\mathcal{R}(K)}-T_{\Lambda}
T_{\Theta}^{\ast})^n$. Define $T_N:\mathcal{H}\rightarrow
\mathcal{H}$ by
$T_N=\sum_{n=0}^N(I_{\mathcal{R}(K)}-T_{\Lambda}T_{\Theta}^{\ast})^n$,
Then $\gamma_j^{(N)}=\Theta_jT_N, \,\ \forall j\in J$. The
sequence $\{\gamma_j^{(N)}\}_{j \in J}$ is obtained from the
$g$-Bessel sequence $\{\Theta_j\}_{j\in j}$ by a bounded operator,
therefore, it is a $g$-Bessel sequence. For each $f \in
\mathcal{R}(K)$,
\begin{eqnarray*}
T_{\Lambda}T_{\Theta}^{\ast}T_Nf=
\sum_{j=0}^{\infty}\Lambda_j^{\ast}\Theta_jT_Nf=
\sum_{j=0}^{\infty}\Lambda_j^{\ast}\gamma_j^{(N)}f=
T_{\Lambda}T_{\Gamma}^{\ast}f,
\end{eqnarray*}
where $T_{\Gamma}$ is the synthesis operator of
$\{\gamma_j^{(N)}\}_{j \in J}$. Thus,
\begin{align*}
T_{\Lambda}T_{\Gamma}^{\ast}f &=T_{\Lambda}T_{\Theta}^{\ast}T_Nf\\
&=[I_{\mathcal{R}(K)}-(I_{\mathcal{R}(K)}-T_{\Lambda}T_{\Theta}^{\ast})]\sum_{n=0}^N
(I_{\mathcal{R}(K)}-T_{\Lambda}T_{\Theta}^{\ast})^nf\\
&=[I_{\mathcal{R}(K)}-(I_{\mathcal{R}(K)}-T_{\Lambda}T_{\Theta}^{\ast})^{N+1}]f,
\end{align*}
by telescoping. Thus
\begin{align}\label{eqn:Appr.7}
\|I_{\mathcal{R}(K)}-T_{\Lambda}T_{\Gamma}^{\ast}\|&=\|(I_{\mathcal{R}(K)}-T_{\Lambda}
T_{\Theta}^{\ast})^{N+1}\|\\
& \leq\|I_{\mathcal{R}(K)}-T_{\Lambda}T_{\Theta}^{\ast})\|^{N+1}.
\end{align}
If $\{\Theta_j\}_{j\in J}$ be an approximate K-g-dual of
$\{\Lambda_j\}_{j\in J}$, then
\begin{align}\label{eqn:Appr.8}
\|I_{\mathcal{R}(K)}-T_{\Lambda}T_{\Theta}^{\ast})\|<1.
\end{align}
By (\ref{eqn:Appr.7}) and (\ref{eqn:Appr.8}), we obtain
\begin{align*}
\|I_{\mathcal{R}(K)}-T_{\Lambda}T_{\Gamma}^{\ast})\|<1,
\end{align*}
That is, $\{\gamma_j^{(N)}\}_{j\in J}$ is an approximate
$K$-$g$-dual of $\{\Lambda_j\}_{j\in J}$.\\

We now summarize what we have proved:

\begin{prop}
Let $\{\Theta_j\}_{j\in J}$ be an approximate K-g-dual of
$\{\Lambda_j\}_{j\in J}$. Then
\begin{align*}
\{\sum_{n=0}^{N}\Theta_j(I_{\mathcal{R}(K)}-T_{\Lambda}T_{\Theta}^{\ast})^n\}_{j\in
J}
\end{align*}
is an approximate K-g-dual of $\{\Lambda_j\}_{j\in J}$.
\end{prop}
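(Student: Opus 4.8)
The plan is to exhibit the stated sequence explicitly as $\gamma_j^{(N)}=\sum_{n=0}^{N}\Theta_j(I_{\mathcal{R}(K)}-T_{\Lambda}T_{\Theta}^{\ast})^n$ and to verify the two requirements in the definition of an approximate $K$-$g$-dual: first that $\{\gamma_j^{(N)}\}_{j\in J}$ is a $g$-Bessel sequence, and second that the associated mixed operator $T_{\Lambda}T_{\Gamma}^{\ast}$ satisfies $\|I_{\mathcal{R}(K)}-T_{\Lambda}T_{\Gamma}^{\ast}\|<1$. The idea is to factor out a single bounded operator from the whole tail and then exploit a telescoping identity.

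First I would set $T_N=\sum_{n=0}^{N}(I_{\mathcal{R}(K)}-T_{\Lambda}T_{\Theta}^{\ast})^n$, a finite sum of bounded operators on the closed subspace $\mathcal{R}(K)$ and hence itself bounded, and observe that $\gamma_j^{(N)}=\Theta_j T_N$ for every $j\in J$. Since $\{\Theta_j\}_{j\in J}$ is $g$-Bessel and post-composition with a fixed bounded operator preserves the Bessel property, $\{\gamma_j^{(N)}\}_{j\in J}$ is again $g$-Bessel; this settles the first requirement.

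Next I would compute the mixed operator. Letting $T_{\Gamma}$ denote the synthesis operator of $\{\gamma_j^{(N)}\}_{j\in J}$, the definitions of analysis and synthesis operators give
\[
T_{\Lambda}T_{\Gamma}^{\ast}f=\sum_{j\in J}\Lambda_j^{\ast}\gamma_j^{(N)}f=\sum_{j\in J}\Lambda_j^{\ast}\Theta_j T_N f=T_{\Lambda}T_{\Theta}^{\ast}T_N f,\qquad f\in\mathcal{R}(K).
\]
Writing $A=T_{\Lambda}T_{\Theta}^{\ast}$, the key algebraic step is the telescoping identity $A\sum_{n=0}^{N}(I_{\mathcal{R}(K)}-A)^n=I_{\mathcal{R}(K)}-(I_{\mathcal{R}(K)}-A)^{N+1}$, which reduces $T_{\Lambda}T_{\Gamma}^{\ast}$ to $I_{\mathcal{R}(K)}-(I_{\mathcal{R}(K)}-T_{\Lambda}T_{\Theta}^{\ast})^{N+1}$. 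The norm estimate then follows from submultiplicativity: $\|I_{\mathcal{R}(K)}-T_{\Lambda}T_{\Gamma}^{\ast}\|=\|(I_{\mathcal{R}(K)}-T_{\Lambda}T_{\Theta}^{\ast})^{N+1}\|\le\|I_{\mathcal{R}(K)}-T_{\Lambda}T_{\Theta}^{\ast}\|^{N+1}$, and since $\{\Theta_j\}_{j\in J}$ is an approximate $K$-$g$-dual the base is strictly less than $1$, so the $(N+1)$-st power remains below $1$, giving the second requirement.

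The step requiring the most care is the identification $T_{\Lambda}T_{\Gamma}^{\ast}=T_{\Lambda}T_{\Theta}^{\ast}T_N$. One must keep careful track of the fact that all these operators are restricted to and map within the closed subspace $\mathcal{R}(K)$, so that $I_{\mathcal{R}(K)}$, the Neumann-type powers, and the pseudo-inverse underlying each $\Theta_j$ are all well defined there; in particular $I_{\mathcal{R}(K)}-T_{\Lambda}T_{\Theta}^{\ast}$ maps $\mathcal{R}(K)$ into itself, so $T_N$ does as well. Concretely, one verifies that post-composing each $\Theta_j$ with the same bounded operator $T_N$ corresponds exactly to pre-composing the mixed operator $T_{\Lambda}T_{\Theta}^{\ast}$ with $T_N$. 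Once this bookkeeping is settled, the remainder is the telescoping sum and the submultiplicative bound, both routine.
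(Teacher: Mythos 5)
Your proposal is correct and follows essentially the same route as the paper: the paper likewise writes $\gamma_j^{(N)}=\Theta_j T_N$ with $T_N=\sum_{n=0}^{N}(I_{\mathcal{R}(K)}-T_{\Lambda}T_{\Theta}^{\ast})^n$, gets the $g$-Bessel property from composition with the bounded operator $T_N$, identifies $T_{\Lambda}T_{\Gamma}^{\ast}=T_{\Lambda}T_{\Theta}^{\ast}T_N$, and concludes via the telescoping identity and the submultiplicative bound $\|(I_{\mathcal{R}(K)}-T_{\Lambda}T_{\Theta}^{\ast})^{N+1}\|\le\|I_{\mathcal{R}(K)}-T_{\Lambda}T_{\Theta}^{\ast}\|^{N+1}<1$. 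No gaps to report.
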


\begin{thm}
Let $\{\Lambda_j \in \mathcal{B}(\mathcal{H},\mathcal{H}_j): j\in
J\}$ be a $K$-$g$-frame and $\{\Theta_j \in
\mathcal{B}(\mathcal{H},\mathcal{H}_j): j\in J\}$ be a $g$-Bessel
sequence. Let also $\{f_{i,j}\}_{i\in I_j}$ be a frame for
$\mathcal{H}_j$ with bounds $A_j$ and $B_j$ for every $j\in J$
such that, $0<A=\inf_{j\in J}A_j\leq \sup_{j\in J}B_j=B<\infty$.
Then $\{\Theta_j\}_{j\in J}$ is an approximate $K$-$g$-dual of
$\{\Lambda_j\}_{j\in J}$ if and only if
$E=\{\Theta_j^{\ast}f_{i,j}\}_{i\in I_j, j\in J}$ is an
approximate dual of
$F=\{\Lambda_j^{\ast}\widetilde{f}_{i,j}\}_{i\in I_j, j\in J}$,
where $\{\widetilde{f}_{i,j}\}_{i \in I_j}$ is the canonical dual
of $\{f_{i,j}\}_{i \in I_j}$.
\end{thm}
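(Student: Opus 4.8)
The plan is to reduce the whole statement to the single operator identity $T_F T_E^{\ast} = T_{\Lambda}T_{\Theta}^{\ast}$ on $\mathcal{R}(K)$, where $T_E$ and $T_F$ denote the synthesis operators of the ordinary sequences $E=\{\Theta_j^{\ast}f_{i,j}\}$ and $F=\{\Lambda_j^{\ast}\widetilde{f}_{i,j}\}$. Once this identity is established, the two approximate-duality conditions become literally the same inequality $\|I_{\mathcal{R}(K)}-T_{\Lambda}T_{\Theta}^{\ast}\|<1$, and the equivalence is immediate.

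First I would verify that $E$ and $F$ are genuine Bessel sequences in $\mathcal{H}$, so that $T_E,T_F$ are bounded and the notion of approximate dual is meaningful. For $F$, using that $\{\widetilde{f}_{i,j}\}_{i\in I_j}$ is the canonical dual frame of $\{f_{i,j}\}_{i\in I_j}$ (hence a frame for $\mathcal{H}_j$ with upper bound $1/A_j\le 1/A$), I would estimate $\sum_{j}\sum_{i}|\langle f,\Lambda_j^{\ast}\widetilde{f}_{i,j}\rangle|^2=\sum_j\sum_i|\langle \Lambda_j f,\widetilde{f}_{i,j}\rangle|^2\le \frac1A\sum_j\|\Lambda_j f\|^2\le \frac{B_1}{A}\|f\|^2$, where $B_1$ is the upper $K$-$g$-frame bound of $\{\Lambda_j\}$. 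For $E$, the same computation using the upper bound $B_j\le B$ of $\{f_{i,j}\}$ together with the $g$-Bessel bound of $\{\Theta_j\}$ gives a finite Bessel bound. The uniform hypothesis $0<A\le B<\infty$ is precisely what keeps these sums finite after summation over $j$.

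The crux is the computation of the mixed operator. Fixing $f$, I compute $T_F T_E^{\ast}f=\sum_{i,j}\langle \Theta_j f,f_{i,j}\rangle\,\Lambda_j^{\ast}\widetilde{f}_{i,j}$. Because $E$ and $F$ are Bessel, $T_E^{\ast}f$ lies in $\ell^2$ and the synthesis series converges unconditionally, so I may regroup by $j$ and pull $\Lambda_j^{\ast}$ out, obtaining $\sum_j \Lambda_j^{\ast}\big(\sum_i\langle \Theta_j f,f_{i,j}\rangle\widetilde{f}_{i,j}\big)$. The inner sum is exactly the canonical-dual reconstruction of $\Theta_j f\in\mathcal{H}_j$, hence equals $\Theta_j f$, and therefore $T_F T_E^{\ast}f=\sum_j\Lambda_j^{\ast}\Theta_j f=T_{\Lambda}T_{\Theta}^{\ast}f$. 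I expect this regrouping-and-reconstruction step to be the main obstacle: the only real content is justifying the interchange of the $i$- and $j$-summations, which is exactly where the Bessel bounds of the previous step are used; the reconstruction inside each $\mathcal{H}_j$ is then immediate.

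With the identity in hand the equivalence follows at once: $\|I_{\mathcal{R}(K)}-T_{\Lambda}T_{\Theta}^{\ast}\|=\|I_{\mathcal{R}(K)}-T_F T_E^{\ast}\|$, so one quantity is $<1$ iff the other is. By the definition of approximate $K$-$g$-dual this says $\{\Theta_j\}_{j\in J}$ is an approximate $K$-$g$-dual of $\{\Lambda_j\}_{j\in J}$ exactly when $E$ is an approximate $K$-dual of $F$, as claimed. The adjoint formulation is handled identically: the analogous computation $T_E T_F^{\ast}f=\sum_j\Theta_j^{\ast}\big(\sum_i\langle \Lambda_j f,\widetilde{f}_{i,j}\rangle f_{i,j}\big)=\sum_j\Theta_j^{\ast}\Lambda_j f=T_{\Theta}T_{\Lambda}^{\ast}f$ shows the adjoint conditions coincide as well, consistently with the remark that the two defining conditions are equivalent under taking adjoints.
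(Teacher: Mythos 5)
Your proposal is correct and takes essentially the same route as the paper: both first verify that $E$ and $F$ are Bessel sequences using the uniform bounds $A,B$, then identify the mixed synthesis/analysis operator with the $g$-operator via the canonical-dual reconstruction inside each $\mathcal{H}_j$ (the paper writes $T_{\Theta}T_{\Lambda}^{\ast}=T_ET_F^{\ast}$, you write the adjoint identity $T_FT_E^{\ast}=T_{\Lambda}T_{\Theta}^{\ast}$ and then note the adjoint version too), so the two approximate-duality inequalities coincide. The only differences are cosmetic: you make explicit the unconditional-convergence justification for regrouping the double sum, which the paper leaves implicit.
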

\begin{proof}
For each $f\in \mathcal{H}$ we have
\begin{align*}
\sum_{j\in J}\sum_{i\in I_j}|\langle f, \Theta_j^{\ast}f_{i,j}
\rangle|^2 & =\sum_{j\in J}\sum_{i\in I_j}|\langle \Theta_jf,
f_{i,j}
\rangle|^2\\
& \leq \sum_{j\in J}B_j\|\Theta_jf\|^2 \leq B\sum_{j\in
J}\|\Theta_jf\|^2.
\end{align*}
This implies that $E$ is a Bessel sequence for $\mathcal{H}$.
Similarly, $F$ is also Bessel sequence for $\mathcal{H}$.
Moreover, for each $f\in \mathcal{H}$ we have
\begin{align*}
T_{\Theta}T_{\Lambda}^{\ast}f & =
\sum_{j\in J}\Theta_j^{\ast}\Lambda_jf\\
& =\sum_{j\in J}\Theta_j^{\ast}\sum_{i\in I_j}\langle \Lambda_jf,
\widetilde{f}_{i,j} \rangle f_{i,j}\\
& =\sum_{j\in J,i\in I_j}\langle f,
\Lambda_J^{\ast}\widetilde{f}_{i,j} \rangle
\Theta_j^{\ast}f_{i,j}\\
& =T_ET_F^{\ast}f.
\end{align*}
So $\|I-T_{\Theta}T_{\Lambda}^{\ast}\|<1$ if and only if
$\|I-T_ET_F^{\ast}\|<1$, this follows the result.
\end{proof}

%============================================================

\section{\bf Redundancy}

One of the important property of frame theory is the possibility
of redundancy. For example, in ~\cite[Theorem
(3.2)]{Casazza.Kutyniok} the authors have provided sufficient
conditions on the weights in the fusion frames to be a fusion
frames, when some elements erasure, that is, some arbitrary
elements can be removed without destroying the
fusion frame property of the remaining set.\\
Our main result in this section provides an equivalent condition
under which the subsequence of a $K$-$g$-frame still to be a
$K$-$g$-frame. In the theory of $K$-$g$-frames, if we have
information on the lower $K$-$g$-frame bound and the norm of the
$K$-$g$-frame elements, we can provide a criterion for how many
elements we can remove:

\begin{prop}
Let $K\in \mathcal{B}(\mathcal{H})$, such that $K^{\ast}$ is
bounded below with constant $C>0$ and also let $\{\Lambda_j \in
\mathcal{B}(\mathcal{H}, \mathcal{H}_j):j\in J\}$ is a
$K$-$g$-frame with lower bound $A>\frac{1}{C}$. Then for each
subset $I\subset J$ with $|I|<AC$ such that, $\|\Lambda_j\|=1,
\forall j \in I$, the family $\{\Lambda_j\}_{j\in J\backslash I}$
is a $K$-$g$-frame for $\mathcal{H}$ with respect to
$\{\mathcal{H}_j\}_{j\in J}$ with lower $K$-$g$-frame bound
$AC^2-|I|$.
\end{prop}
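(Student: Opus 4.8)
The plan is to establish the two $K$-$g$-frame bounds for the reduced family $\{\Lambda_j\}_{j\in J\setminus I}$ separately, with the upper bound essentially free and all the work concentrated in the lower bound. Since $\{\Lambda_j\}_{j\in J}$ is a $K$-$g$-frame it is in particular $g$-Bessel with upper bound $B$, and any subfamily of a $g$-Bessel sequence is again $g$-Bessel with the same bound; hence $\sum_{j\in J\setminus I}\|\Lambda_jf\|^2\le B\|f\|^2$ for all $f$, with no further argument. So it remains only to produce a strictly positive lower bound.

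For the lower estimate I would split the full sum and isolate the removed indices, writing
\[
\sum_{j\in J\setminus I}\|\Lambda_jf\|^2=\sum_{j\in J}\|\Lambda_jf\|^2-\sum_{j\in I}\|\Lambda_jf\|^2 .
\]
The first sum is controlled from below by the $K$-$g$-frame hypothesis, $\sum_{j\in J}\|\Lambda_jf\|^2\ge A\|K^{\ast}f\|^2$, and the assumption that $K^{\ast}$ is bounded below, $\|K^{\ast}f\|\ge C\|f\|$, upgrades this to $AC^2\|f\|^2$. The second sum is finite and is controlled from above using the normalization $\|\Lambda_j\|=1$ for $j\in I$: each term satisfies $\|\Lambda_jf\|\le\|\Lambda_j\|\,\|f\|=\|f\|$, so $\sum_{j\in I}\|\Lambda_jf\|^2\le|I|\,\|f\|^2$. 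Combining the two bounds gives the one-line estimate
\[
\sum_{j\in J\setminus I}\|\Lambda_jf\|^2\ge\bigl(AC^2-|I|\bigr)\|f\|^2 ,
\]
which is the structural heart of the argument.

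The delicate point, and the step I expect to be the real obstacle, is the bookkeeping that converts this inequality into a genuine lower $K$-$g$-frame bound. Two things must be checked. First, positivity of the constant has to be secured, and this is precisely where the cardinality restriction on $|I|$ (together with $A>1/C$, which guarantees the hypothesis is non-vacuous by allowing at least one index to be discarded) is used, so that $AC^2-|I|>0$ and the inequality is meaningful rather than vacuous. Second, the $K$-$g$-frame definition requires a lower bound against $\|K^{\ast}f\|^2$ rather than against $\|f\|^2$; since $K^{\ast}$ is bounded below the two quantities are comparable (via $\|K^{\ast}f\|\le\|K\|\,\|f\|$ in one direction and $\|K^{\ast}f\|\ge C\|f\|$ in the other), so the $\|f\|^2$-estimate transfers to the required $\|K^{\ast}f\|^2$-form and one records $AC^2-|I|$ as the advertised lower $K$-$g$-frame bound. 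Reconciling these two formulations and pinning down the exact positivity threshold on $|I|$ is where the care is needed; the remaining content is just the splitting computation above.
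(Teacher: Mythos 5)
Your central computation is exactly the paper's proof: estimate $\sum_{j\in I}\|\Lambda_jf\|^2\le|I|\,\|f\|^2$ from the normalization $\|\Lambda_j\|=1$, subtract it from the full-sum bound $\sum_{j\in J}\|\Lambda_jf\|^2\ge A\|K^{\ast}f\|^2\ge AC^2\|f\|^2$, and conclude
\[
\sum_{j\in J\setminus I}\|\Lambda_jf\|^2\ge\bigl(AC^2-|I|\bigr)\|f\|^2 .
\]
The paper's proof consists of this computation and nothing more, so on the core argument you and the paper agree completely.

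The two bookkeeping claims in your last paragraph, however, do not hold as stated --- and, tellingly, they are precisely the points the paper's proof silently skips. First, positivity: $|I|<AC$ together with $A>1/C$ does \emph{not} imply $AC^2-|I|>0$ when $C<1$. Concretely, take $\mathcal{H}=\ell^2$ with orthonormal basis $\{e_j\}$, $\Lambda_jf=\langle f,e_j\rangle e_j$, and $K=\tfrac12 I$; then $C=\tfrac12$, one may take $A=4>1/C$, and $\|\Lambda_j\|=1$ for all $j$. The set $I=\{1\}$ satisfies $|I|=1<AC=2$, yet $AC^2-|I|=0$, and $\{\Lambda_j\}_{j\ge2}$ is in fact not a $K$-$g$-frame at all, since it annihilates $e_1$ while $K^{\ast}e_1\neq0$. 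So the proposition itself fails under the stated hypotheses; the cardinality condition one actually needs is $|I|<AC^2$. Second, the transfer to the $\|K^{\ast}f\|^2$ form: using $\|K^{\ast}f\|\le\|K\|\,\|f\|$ converts the displayed inequality into a lower $K$-$g$-frame bound $(AC^2-|I|)/\|K\|^2$, not $AC^2-|I|$; the advertised constant comes out only if one pretends $\|f\|\ge\|K^{\ast}f\|$, which requires $\|K\|\le1$. The paper sidesteps both issues only by stopping at the $\|f\|^2$ inequality and declaring the proposition proved. In short: your approach is the paper's approach and your inequality is right, but the ``delicate bookkeeping'' you flagged cannot actually be carried out from the stated hypotheses --- a defect the paper's own statement and proof share.
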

\begin{proof}
Given $f \in \mathcal{H}$,
\begin{align*}
\sum_{j\in I}\|\Lambda_j f\|^2 \leq \sum_{j\in I}\|\Lambda_j\|^2
\|f\|^2=|I|\|f\|^2.
\end{align*}
Thus
\begin{align*}
\sum_{j\in J\backslash I}\|\Lambda_j f\|^2 & \geq
A\|K^{\ast}f\|^2-|I|\|f\|^2\\
& \geq AC^2\|f\|^2-|I|\|f\|^2\\
& =(AC^2-|I|)\|f\|^2.
\end{align*}
\end{proof}

\begin{thm}
Let $I\subset J$. Suppose that $\{\Lambda_j \in
\mathcal{B}(\mathcal{H}, \mathcal{H}_j):j\in J\}$ is a
$K$-$g$-frame with bounds $A, B$ and $K$-$g$-frame operator
$S_{\Lambda,J}$. Then the following statements are equivalent:
\begin{enumerate}
\item[(i)]
$I-S^{-1}_{\Lambda,J}S_{\Lambda,I}$ is boundedly invertible,
\item[(ii)] The sequence $\{\Lambda_j\}_{j\in J\backslash I}$ is a $K$-$g$-frame
for $\mathcal{H}$ with respect to $\{\mathcal{H}_j\}_{j\in J}$
with lower $K$-$g$-frame bound
$\frac{A}{\|(I-S^{-1}_{\Lambda,J}S_{\Lambda,I})^{-1}\|^2}$.
\end{enumerate}
\end{thm}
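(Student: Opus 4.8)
The plan is to reduce the whole statement to the behaviour of a single operator built from the frame operators on the two index blocks. First I would record the additivity of the $K$-$g$-frame operator: since $\{\Lambda_j\}_{j\in J}$ is $g$-Bessel, the series defining $S_{\Lambda,J}f=\sum_{j\in J}\Lambda_j^{\ast}\Lambda_j f$ converges unconditionally, so splitting the sum over $I$ and $J\backslash I$ gives $S_{\Lambda,J}=S_{\Lambda,I}+S_{\Lambda,J\backslash I}$, where $S_{\Lambda,I}$ and $S_{\Lambda,J\backslash I}$ are bounded, positive and self-adjoint. Writing $M:=I-S_{\Lambda,J}^{-1}S_{\Lambda,I}$, this yields the factorisation $S_{\Lambda,J\backslash I}=S_{\Lambda,J}M$, which is the pivot of both implications. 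To extract the quantitative bound I would pass to the symmetrised operator $N:=I-S_{\Lambda,J}^{-1/2}S_{\Lambda,I}S_{\Lambda,J}^{-1/2}=S_{\Lambda,J}^{1/2}MS_{\Lambda,J}^{-1/2}$; it is self-adjoint, similar to $M$ (hence cospectral with it), and it produces the congruence $S_{\Lambda,J\backslash I}=S_{\Lambda,J}^{1/2}\,N\,S_{\Lambda,J}^{1/2}$. Because $S_{\Lambda,I}\ge0$ we get $0\le N\le I$, so $N$ is a positive contraction whose spectrum, equal to that of $M$, lies in $(0,1]$.

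For $(i)\Rightarrow(ii)$ the upper bound is free: $\{\Lambda_j\}_{j\in J\backslash I}$ is a subfamily of a $g$-Bessel sequence, so $\sum_{j\in J\backslash I}\|\Lambda_j f\|^2\le B\|f\|^2$. For the lower bound I would write, with $u=S_{\Lambda,J}^{1/2}f$, that $\sum_{j\in J\backslash I}\|\Lambda_j f\|^2=\langle S_{\Lambda,J\backslash I}f,f\rangle=\langle Nu,u\rangle\ge \lambda_{\min}(N)\,\|u\|^2=\lambda_{\min}(N)\,\langle S_{\Lambda,J}f,f\rangle\ge A\,\lambda_{\min}(N)\,\|K^{\ast}f\|^2$, using the lower $K$-$g$-frame inequality of $\{\Lambda_j\}_{j\in J}$ in the last step. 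It then remains to compare $\lambda_{\min}(N)$ with $\|M^{-1}\|$. Since $N$ and $M$ share their spectrum and $M$ is invertible, $\lambda_{\min}(N)=\inf\mathrm{spec}(M)\ge 1/\|M^{-1}\|$; and since $\lambda_{\min}(N)\le1$ forces $\|M^{-1}\|\ge1$, I get $\lambda_{\min}(N)\ge 1/\|M^{-1}\|^2$. This delivers exactly the advertised lower bound $A/\|(I-S_{\Lambda,J}^{-1}S_{\Lambda,I})^{-1}\|^2$ (in fact the slightly sharper $A/\|N^{-1}\|$).

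For $(ii)\Rightarrow(i)$ I would run the factorisation backwards: $M=S_{\Lambda,J}^{-1}S_{\Lambda,J\backslash I}$ is boundedly invertible precisely when $S_{\Lambda,J\backslash I}$ is, so it suffices to show that the assumed $K$-$g$-frame property of $\{\Lambda_j\}_{j\in J\backslash I}$ makes $S_{\Lambda,J\backslash I}$ bounded below. Here I would invoke the hypothesis implicit in the very notation $S_{\Lambda,J}^{-1}$, namely that $S_{\Lambda,J}$ is invertible, which in this setting amounts to $K^{\ast}$ being bounded below by some $C>0$; this upgrades the lower estimate $\langle S_{\Lambda,J\backslash I}f,f\rangle\ge A'\|K^{\ast}f\|^2$ to $\langle S_{\Lambda,J\backslash I}f,f\rangle\ge A'C^2\|f\|^2$, whence $S_{\Lambda,J\backslash I}\ge A'C^2 I$ is boundedly invertible and so is $M$.

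The hard part will be this reverse implication, and more precisely its dependence on invertibility: a genuine $K$-$g$-frame operator need not be bounded below, since its lower bound controls only $\|K^{\ast}f\|$, so without the bounded-below hypothesis on $K^{\ast}$ the operator $M$ can fail to be invertible even when $\{\Lambda_j\}_{j\in J\backslash I}$ is a bona fide $K$-$g$-frame. I would therefore make this hypothesis explicit, or else restrict the identities to $\mathcal{R}(K)$ in the spirit of the earlier definitions. The forward implication, by contrast, is robust; its only genuine subtlety is the spectral comparison $\lambda_{\min}(N)\ge 1/\|M^{-1}\|^2$, which rests on the cospectrality of the similar operators $M$ and $N$ together with the contraction bound $N\le I$.
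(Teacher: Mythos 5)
Your proposal is correct, and in two places it is actually more careful than the paper's own proof, which it does not follow. Both arguments pivot on the same factorisation $S_{\Lambda,J\backslash I}=S_{\Lambda,J}\bigl(I-S^{-1}_{\Lambda,J}S_{\Lambda,I}\bigr)$, but the quantitative step is different. The paper works with $M:=I-S^{-1}_{\Lambda,J}S_{\Lambda,I}$ directly: it derives the identity $Mf=\sum_{j\in J\backslash I}S^{-1}_{\Lambda,J}\Lambda^{\ast}_j\Lambda_jf$, bounds $\|Mf\|\le\sqrt{A^{-1}}\bigl(\sum_{j\in J\backslash I}\|\Lambda_jf\|^2\bigr)^{1/2}$ by duality and Cauchy--Schwarz, and concludes from $\|K^{\ast}f\|\le\|K^{\ast}M^{-1}\|\,\|Mf\|$. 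Your symmetrisation $N=S^{-1/2}_{\Lambda,J}S_{\Lambda,J\backslash I}S^{-1/2}_{\Lambda,J}$ buys two things. First, the paper's Cauchy--Schwarz step silently uses $\langle g,S^{-1}_{\Lambda,J}g\rangle\le A^{-1}\|g\|^2$, i.e.\ $S_{\Lambda,J}\ge A\,I_{\mathcal{H}}$, which follows from a $g$-frame lower bound but \emph{not} from the $K$-$g$-frame inequality $\langle S_{\Lambda,J}f,f\rangle\ge A\|K^{\ast}f\|^2$; your chain $\langle Nu,u\rangle\ge\lambda_{\min}(N)\|u\|^2$ followed by the genuine $K$-$g$ lower bound avoids this entirely. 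Second, the constant the paper's computation actually yields is $A/\|K^{\ast}M^{-1}\|^2$, which is not the constant in the statement; your spectral comparison $\lambda_{\min}(N)\ge 1/\|M^{-1}\|\ge 1/\|M^{-1}\|^2$ gives exactly the stated $A/\|M^{-1}\|^2$, and even the sharper $A/\|M^{-1}\|$. (One trivial slip in your setup: $0\le N\le I$ alone puts the spectrum in $[0,1]$; it lies in $(0,1]$ only once (i) is assumed, which is the only place you use it.)

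Your diagnosis of $(ii)\Rightarrow(i)$ is also on target, and the defect you anticipate is present in the paper: that direction is disposed of there by the single assertion that $\{\Lambda_j\}_{j\in J\backslash I}$ is a $K$-$g$-frame if and only if $S_{\Lambda,J\backslash I}$ is boundedly invertible. This equivalence is true for ordinary $g$-frames but false for $K$-$g$-frames; the paper's own Example 2 is a $K$-$g$-frame whose frame operator is a rank-three projection. Concretely, take $K=P_1$ the orthogonal projection onto $\mathrm{span}\{e_1\}$, $\Lambda_1f=\langle f,e_1\rangle e_1$ with $\mathcal{H}_1=\overline{\mathrm{span}}\{e_1\}$, $\Lambda_2=I_{\mathcal{H}}$ with $\mathcal{H}_2=\mathcal{H}$, and $I=\{2\}$: then $S_{\Lambda,J}=P_1+I_{\mathcal{H}}$ is invertible and $\{\Lambda_1\}$ is a $K$-$g$-frame, yet $S_{\Lambda,J\backslash I}=P_1$, hence also $M$, is not invertible. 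So the converse genuinely needs an added hypothesis, and your repair ($K^{\ast}$ bounded below with constant $C$, giving $S_{\Lambda,J\backslash I}\ge A'C^2 I_{\mathcal{H}}$) works. Your only loose phrase is that invertibility of $S_{\Lambda,J}$ ``amounts to'' $K^{\ast}$ being bounded below: the same example shows $S_{\Lambda,J}$ can be invertible while $K^{\ast}$ is not bounded below, so this hypothesis must be imposed, not deduced from the notation.
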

\begin{proof}
Denote the frame operator of $K$-$g$-frame $\{\Lambda_j\}_{j\in
J\backslash I}$ by $S_{\Lambda,J\backslash I}$. Since
$$S_{\Lambda,J\backslash I}=S_{\Lambda,J}-S_{\Lambda,I}=
S_{\Lambda,J}(I-S^{-1}_{\Lambda,J}S_{\Lambda,I}),$$ we have,
$\{\Lambda_j\}_{j\in J \backslash I}$ is $K$-$g$-frame if and only
if $S_{\Lambda,J\backslash I}$ is boundedly invertible and then if
and only if $I-S^{-1}_{\Lambda,J}S_{\Lambda,I}$ is boundedly invertible.\\
Now, for the lower $K$-$g$-frame bound, assume that
$I-S^{-1}_{\Lambda,J}S_{\Lambda,I}$ is invertible. Since
$\{\Lambda_j\}_{j\in J}$ is a $K$-$g$-frame for $\mathcal{H}$ with
respect to $\{\mathcal{H}_j\}_{j\in J}$ with bounds $A$ and $B$,
for any $f \in \mathcal{H}$,
\begin{align*}
f & =S^{-1}_{\Lambda,J}S_{\Lambda,J}f\\
& =S^{-1}_{\Lambda,J}(\sum_{j\in I}\Lambda^{\ast}_j\Lambda_jf +
\sum_{j\in J\backslash
I}\Lambda^{\ast}_j\Lambda_jf)\\
& = S^{-1}_{\Lambda,J}S_{\Lambda,I}f+ \sum_{j\in J\backslash I}
S^{-1}_{\Lambda,J} \Lambda^{\ast}_j\Lambda_jf.
\end{align*}
Hence we have, $(I-S^{-1}_{\Lambda,J}S_{\Lambda,I})f= \sum_{j\in
J\backslash I} S^{-1}_{\Lambda,J} \Lambda^{\ast}_j\Lambda_jf$.

Therefore we obtain
\begin{align*}
\|(I-S^{-1}_{\Lambda,J}S_{\Lambda,I})f\| & =\|\sum_{j\in
J\backslash I} S^{-1}_{\Lambda,J}
\Lambda^{\ast}_j\Lambda_jf\|\\
& = \sup_{g\in \mathcal{H}, \|g\|=1}|\langle \sum_{j\in
J\backslash I} S^{-1}_{\Lambda,J}
\Lambda^{\ast}_j\Lambda_jf, g \rangle|\\
& = \sup_{g\in \mathcal{H}, \|g\|=1}|\sum_{j\in J\backslash I}
\langle\Lambda_jf, \Lambda_jS^{-1}_{\Lambda,J}g \rangle|\\
& \leq \sup_{g\in \mathcal{H}, \|g\|=1}\sum_{j\in J\backslash
I} \|\Lambda_jf\| \|\Lambda_jS^{-1}_{\Lambda,J}g\|\\
& \leq \sup_{g\in \mathcal{H}, \|g\|=1}(\sum_{j\in J\backslash I}
\|\Lambda_jf\|^2)^{\frac{1}{2}}(\sum_{j\in J\backslash I}
\|\Lambda_jS^{-1}_{\Lambda,J}g\|^2)^{\frac{1}{2}}\\
& \leq \sup_{g\in \mathcal{H}, \|g\|=1}(\sum_{j\in J\backslash I}
\|\Lambda_jf\|^2)^{\frac{1}{2}}(\langle
S_{\Lambda,J}(S^{-1}_{\Lambda,J}g),
S^{-1}_{\Lambda,J}g\rangle)^{\frac{1}{2}}.
\end{align*}
That is
\begin{align}\label{Noneqn:1}
\|(I-S^{-1}_{\Lambda,J}S_{\Lambda,I})f\| \leq
\sqrt{A^{-1}}(\sum_{j\in J\backslash I}
\|\Lambda_jf\|^2)^{\frac{1}{2}}.
\end{align}
It follows that $I-S^{-1}_{\Lambda,J}S_{\Lambda,I}$ is well
defined in $\mathcal{H}$. If $I-S^{-1}_{\Lambda,J}S_{\Lambda,I}$
is invertible on $\mathcal{H}$, then for any $f \in \mathcal{H}$
we have
\begin{align}\label{Noneqn:2}
\|K^{\ast}f\|\leq
\|K^{\ast}(I-S^{-1}_{\Lambda,J}S_{\Lambda,I})^{-1}\|\cdot\|(I-S^{-1}_{\Lambda,J}S_{\Lambda,I}f)\|.
\end{align}
From (\ref{Noneqn:1}) and (\ref{Noneqn:2}) we have
\begin{align*}
\frac{A}{\|K^{\ast}(I-S^{-1}_{\Lambda,J}S_{\Lambda,I})^{-1}\|^2}\|K^{\ast}f\|^2
\leq \sum_{j\in J\backslash I} \|\Lambda_jf\|^2,\,\ \forall f \in
\mathcal{H}.
\end{align*}
This complete the proof.
\end{proof}

%==================================================================


\begin{thebibliography}{20}

\bibitem{Asgari.Rahimi}
Asgari, M.S., Rahimi, H. {\it Generalized frames for operators in
Hilbert spaces}, Infin. Dimens. Anal. Quantum Probab. Relat.
Top. 17, no 2, 1450013, 20 pp (2014)

\bibitem{Balazs}
Balazs, P., Antoine, J.P., Grybos, A. {\it Weighted and contorlled
frames, Mutual relationship and first numerical properties},
Int. J. Wavelets, Multiresolut. Inf. Process.
 Vol. 8, No. 1, (2010), 109-132.

\bibitem{Casazza.Kutyniok}
Casazza, P.G., Kutyniok, G. {\it Robustness of fusion frames under
erasures of subspaces and of local frame vectors}, Contemp.
Math. 464, (2008), 149-160.

\bibitem{Casazza.Kutyniok.Li}
Casazza, P.G., Kutyniok, G., Li, S. {\it Fusion frames and distributed
processing}, Appl. Comput. Harmon. Anal., 25 , no. 1,
 (2008), 114-132.

\bibitem{Christensen.book}
Christensen, O. {\it An introduction to frames and Riesz Bases},
Birkh\"{a}user, Boston, MA, (2008)

\bibitem{Christensen.Laugesen}
Christensen, O., Laugesen, R.S.P. {\it Approximate dual frames in
Hilbert spaces and applications to Gabor frames}, Sampl.
Theory Signal Image Process., 9, (2011), 77-90.

\bibitem{Duffin.Schaeffer}
Duffin, R.J., Schaeffer, A.C. {\it A class of nonharmonic Fourier
series}, Trans. Amer. Math. Soc., 72, (1952), 341-366.


\bibitem{Daubechies.Grossmann.Meyer}
Daubechies, I., Grossmann, A., Meyer, Y. {\it Painless nonorthogonal
expansions}, J. Math. Phys. 27, no. 5, (1986), 1271-1283.

\bibitem{Feichtinger.Kaiblinger}
Feichtinger, H.G., Kaiblinger, N. {\it Varying the time-frequency
lattice of Gabor frames}, Trans. Amer. Math. Soc.,
 356, (2004), 2001-2023.

\bibitem{Hua.Huang}
Hua, D., Huang, Y. {\it $K$-$g$-frames and stability of $K$-$g$-frames
in Hilbert spaces}, Korian J. Math. 53, no. 6,
 (2016), 1331-1345.

\bibitem{Gilbert.et.al}
Gilbert, J.E., Han, Y.S., Hogan, J.A., Lakey, J.D., Weiland, D.,
 Weiss, G. {\it Smooth molecular decompositions of functions and
singular integral operators}, Mem. Amer. Math. Soc.
156, no. 742, 74 pp., (2002)

\bibitem{Gavruta}
Gavruta, L. {\it Frames for operators}, Appl. Comput. Harmon.
Anal., 32, no. 1, (2012), 139-144.

\bibitem{Cazassa.Kutyniok}
Cazassa, P.G., Kutyniok, G. {\it Frames of subspaces, Wavelets, frames
and opeator theory}, Contemp. Math., Vol. 345, Amer. Math. Soc.,
Providence, R. I., (2004), 87-113.



\bibitem{Zhou.Zhu.1}
Zhou, Y., Zhu, Y. {\it $K$-$g$-frames and dual $g$-frames for closed
subspaces}, Acta Math. Sinica (Chin. Ser.) 56, 5,
 (2013), 799-806.


\bibitem{Zhou.Zhu.2}
Zhou, Y., Zhu, Y. {\it Characterizations of $K$-$g$-frames in Hilbert
spaces}, Acta Math. Sinica (Chin. Ser.) 57, 5,
 (2014), 1031-1040.

\bibitem{Sun}
Sun, W. {\it $G$-frames and $g$-Riesz bases}, J. Math. Anal. Appl. 322,
no. 1, (2006), 437-452.
\end{thebibliography}
\end{document}